\newcommand{\numberset}[1]{\mathbb{#1}}    
\newcommand{\C}{\numberset{C}}  
\newcommand{\R}{\numberset{R}}  
\newcommand{\Z}{\numberset{Z}}  
\newcommand{\PP}{\numberset{P}}  
\newcommand{\T}{\numberset{T}}
\newcommand{\FF}{\numberset{F}}
\newcommand{\F}{\mathcal F}
\newcommand{\inn}[2]{ \langle {#1}, {#2} \rangle}
\theoremstyle{definition}
\newtheorem{thm}{Theorem}[section]
\newtheorem{prop}[thm]{Proposition}
\newtheorem{lem}[thm]{Lemma}
\newtheorem{cor}[thm]{Corollary}
\newtheorem{rem}[thm]{Remark}
\newtheorem{ex}[thm]{Example}
\newtheorem{defi}[thm]{Definition}
\newtheorem{con}[thm]{Conjecture}
\DeclareMathOperator{\vol}{vol}
\DeclareMathOperator{\Hom}{Hom}
\DeclareMathOperator{\Sk}{Skel}
\DeclareMathOperator{\trop}{trop}
\newcommand{\mycomments}[1]{
           \ifthenelse{\boolean{mynotes}}
                      {#1}{}
           }
\begin{document}

\title[{First Betti Number of real Calabi Yau Hypersurfaces: examples}]{{First Betti Number of real Calabi Yau Hypersurfaces: examples}}
\author{Diego Matessi, Arthur Renaudineau}

\begin{abstract}
Continuing the investigation of real Calabi–Yau hypersurfaces in toric varieties obtained by patchworking, we present a new theorem concerning the computation of their first Betti number using mirror symmetry. Although the proof of this result will appear elsewhere, we focus here on its consequences and applications to the topology of real Calabi–Yau hypersurfaces.
\end{abstract}

\maketitle
\tableofcontents

\section{Introduction}
\subsection{Tropical mirror symmetry and connectedness} 
In our previous work \cite{MR24}, we studied the tropical homology groups of mirror pairs of tropical Calabi–Yau hypersurfaces arising from unimodular central subdivisions of reflexive polytopes. Let $\Delta$ and $\Delta^{\circ}$ be dual $(n+1)$-dimensional reflexive polytopes, and suppose that each admits a primitive central subdivision. These subdivisions yield resolutions of the toric varieties associated with $\Delta$ and $\Delta^{\circ}$, as well as crepant resolutions of their anticanonical hypersurfaces, thereby defining a mirror pair of smooth $n$-dimensional Calabi–Yau varieties $X$ and $X^{\circ}$ (see \cite{batyr:dual_pol}). Consequently, the varieties $X$ and $X^{\circ}$ satisfy the familiar mirror symmetry relation among Hodge numbers,
\begin{equation} \label{mirror:hdg}
h^{p,q}(X) = h^{n-p,q}(X^{\circ}).
\end{equation}
In \cite{MR24}, we further demonstrated that the same combinatorial data determine tropical hypersurfaces $X_{\mathrm{trop}}$ and $X_{\mathrm{trop}}^{\circ}$ which likewise exhibit a mirror symmetry relation:
\begin{thm}[\cite{MR24}] \label{thm1MR24} Let $A$ be a commutative ring. Given central primitive subdivisions of both $\Delta$ and $\Delta^{\circ}$ with associated mirror tropical hypersurfaces $X_{\text{trop}}$ and $X^{\circ}_{\text{trop}}$, we have canonical isomorphisms 
\[ H_{q}(X_{\text{trop}}; \mathcal F_p \otimes A ) \cong H_{q}(X_{\text{trop}}^{\circ}; \mathcal F_{n-p} \otimes A). \]
\end{thm}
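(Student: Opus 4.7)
The plan is to construct an explicit cellular chain-level isomorphism matching the cells and stalks contributing to $H_q(X_{\text{trop}}; \mathcal F_p \otimes A)$ with those contributing to $H_q(X^\circ_{\text{trop}}; \mathcal F_{n-p} \otimes A)$, and then verify that this matching commutes with the boundary operators. All constructions will be natural over $\mathbb Z$, so tensoring with a general ring $A$ is formal; I will work with $A = \mathbb Z$ throughout.

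First I would recall (and, where convenient, sharpen) the combinatorial cellular model for $X_{\text{trop}}$ developed in \cite{MR24}. In that model each cell is indexed by a pair $(\sigma, F)$, where $\sigma$ is a cell of the primitive subdivision of $\Delta^\circ$ and $F$ is the face of $\Delta$ polar-dual to the minimal face of $\Delta^\circ$ containing $\sigma$, subject to a combinatorial compatibility condition. The dimension of the associated tropical cell is computed from $\dim \sigma$ and $\dim F$, and the stalk $(\mathcal F_p)_c$ decomposes according to the splitting of multi-tangent directions into those along $\sigma$ and those along $F$. The analogous description for $X^\circ_{\text{trop}}$ is obtained by exchanging the roles of $\Delta$ and $\Delta^\circ$.

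Next, using polar duality of reflexive polytopes I would define a bijection of indexing sets
\[ (\sigma, F) \longmapsto (F^\star, \sigma^\star), \]
where $F^\star$ denotes a suitable cell of the (dual) primitive subdivision of $\Delta$ inside the polar-dual face of $F$, and similarly for $\sigma^\star$. Compatibility of the two central primitive subdivisions under polar duality — an input already built into the Batyrev mirror setup — makes this well-defined. A direct dimension count, using that $\dim F + \dim F^\star = n$ for polar-dual proper faces of $(n+1)$-dimensional reflexive polytopes, shows that the cellular dimension is \emph{preserved} (not reversed) under this bijection, so the homological degree $q$ is preserved.

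I would then produce, cell by cell, an isomorphism of stalks $(\mathcal F_p)_c \cong (\mathcal F_{n-p})_{c^\vee}$. The main ingredient is the perfect pairing $N \times M \to \mathbb Z$ between the lattices of $\Delta$ and $\Delta^\circ$, which restricts to a perfect pairing between the tangent spaces of polar-dual faces; together with the canonical top-lattice trivialization provided by reflexivity (the origin being the unique interior lattice point) and the standard exterior-power duality $\bigwedge^p V \otimes \bigwedge^{n-p} V^\vee \to \det V^\vee$, this delivers the required coefficient flip $p \leftrightarrow n-p$. Finally I would verify that these stalk isomorphisms intertwine the tropical cellular boundaries: since the boundary combines a geometric face incidence with an inclusion of multi-tangent spaces, and since both are combinatorially local, the check reduces to adjacent pairs of cells and to a coherent choice of orientations on both $\Delta$ and $\Delta^\circ$.

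The main obstacle I anticipate is the stalk identification. The sheaves $\mathcal F_p$ do not split as simple tensor products along the $(\sigma, F)$-decomposition but involve combinatorial sums recording which multi-tangent directions are visible at each cell, and the isomorphism with $\mathcal F_{n-p}$ across the mirror has to match these summands term by term. Unpacking this combinatorial matching and tracking signs in the boundary compatibility are the technical heart of the argument; once done, passing to homology and extending scalars to $A$ are routine.
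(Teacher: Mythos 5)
First, a caveat on the comparison itself: the paper you were given only \emph{restates} Theorem \ref{thm1MR24} from \cite{MR24} and contains no proof of it, so there is no in-text argument to measure yours against line by line. Judged on its own terms, however, your proposal has a gap at its foundation. You propose to realize the isomorphism by a dimension-preserving bijection between the cells of $X_{\trop}$ and those of $X^{\circ}_{\trop}$, together with stalk isomorphisms $(\mathcal F_p)_c \cong (\mathcal F_{n-p})_{c^{\star}}$ commuting with the boundary maps. No such bijection exists in general: the cells of $X_{\trop}$ are indexed by the poset $\mathcal P^1(T^\circ,T)$ and those of $X^{\circ}_{\trop}$ by $\mathcal P^1(T,T^\circ)$, and these have different cardinalities in each dimension. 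For instance, the $n$-dimensional cells of $X_{\trop}$ are the pairs $(\{0\},\sigma)$ with $\sigma$ an edge of $T$, so they are counted by the edges of the triangulation $T$ of $\Delta$, while the $n$-dimensional cells of $X^{\circ}_{\trop}$ are counted by the edges of $T^\circ$; already for the quintic ($\Delta=5P$ with $P$ the standard $4$-simplex) these numbers differ by orders of magnitude, since $\Delta$ has vastly more lattice points than $\Delta^\circ$. Relatedly, the ``dual cell'' $F^{\star}$ you invoke is not defined: polar duality pairs a face $F$ of $\Delta$ with a face $F^{\vee}$ of $\Delta^\circ$, but the triangulations $T$ and $T^\circ$ are chosen independently and there is no induced correspondence between the simplices subdividing $F$ and those subdividing $F^{\vee}$. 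The two tropical hypersurfaces are genuinely different spaces --- they need not even have the same ordinary Betti numbers, which is the $p=0$ versus $p=n$ instance of the very theorem you are proving --- so the mirror isomorphism cannot be induced by an isomorphism of cellular chain complexes.

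Any correct argument therefore has to operate at the level of (co)homology rather than chains: one decomposes both sides according to the pairs of polar-dual faces $(F,F^{\vee})$, identifies the contribution of each pair to $H_q(X_{\trop};\mathcal F_p)$ and to $H_q(X^{\circ}_{\trop};\mathcal F_{n-p})$ with a common combinatorial invariant, in the spirit of the Batyrev--Borisov computation of stringy Hodge numbers, and only then assembles the global isomorphism. Your instinct to use the perfect pairing between the tangent lattices of dual faces and exterior-power duality is the right ingredient for matching these face-by-face contributions, but it must be applied after such a decomposition, not cell by cell; note also that $\mathcal F_p(\tau,\sigma)$ is a \emph{sum} of exterior powers of quotients rather than a single exterior power, so even locally the ranks on the two sides do not match under any naive duality of stalks.
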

Here $H_{q}(X_{\trop}; \mathcal F_p )$ denotes the tropical homology groups as introduced by Itenberg, Katzarkov, Mikhalkin and Zharkov  \cite{trop_hom}. These groups are defined as the cellular homology of a certain non-constant cosheaf $\mathcal F_p$ of $\Z$-modules defined over $X_{\text{trop}}$. 
\\ In the second part of \cite{MR24}, we investigated real Calabi--Yau hypersurfaces obtained via patchworking arising from a primitive central triangulation \(T\) of a reflexive polytope \(\Delta\). In this setting, the choice of a sign distribution on the integral points of \(\Delta\) uniquely determines a divisor \(D\) in the Picard group (with \(\mathbb{F}_2\)-coefficients) of the resolved dual toric variety \(\mathbb{C}\Sigma_T\) corresponding to \(\Delta^{\circ}\). Indeed, the integral points on the boundary of \(\Delta\) correspond to rays of the fan of \(\mathbb{C}\Sigma_T\), and hence to toric divisors. The divisor \(D\) is defined as the sum of those toric divisors whose rays share the same sign as the center of \(\Delta\). Such a choice of signs also defines a tropical divisor (with $\FF_2$-coefficients) on the tropical toric variety $\T\Sigma_T$, and consequently determines a class in the tropical homology group
\(
H_{n-1}(\T\Sigma_T; \mathcal{F}_{n-1} \otimes \FF_2).
\)
For the general theory of tropical toric varieties, we refer the reader to our previous work~\cite{MR24} and the references therein.

If we denote by \(\mathbb{R}X_D\) the real part of the hypersurface \(X_D\) obtained by patchworking, we proved in \cite{MR24} that whenever two divisors \(D\) and \(D'\) are linearly equivalent (over \(\mathbb{F}_2\)), the corresponding real hypersurfaces \(\mathbb{R}X_D\) and \(\mathbb{R}X_{D'}\) are identical up to an automorphism of the ambient toric variety \(\mathbb{C}\Sigma_T\). In fact, the sign distributions defining \(D\) and \(D'\) correspond to two octants of the same patchworking. The main result of the second part of \cite{MR24} is the following theorem:
\begin{thm}[\cite{MR24}] \label{thm:conncomp}
Let $\R X_D$ be a real Calabi-Yau hypersurface arising from a central, primitive patchworking of a reflexive polytope $\Delta$ such that the tropical homology groups $H_{n}(X_{\text{trop}}; \mathcal F_k \otimes \FF_2)$ vanish for all $0 < k < n$.  Then $\R X_D$ is connected if and only if the tropical divisor class $D_{| X_{\text{trop}}^{\circ}} \in H_{n-1}(X_{\text{trop}}^{\circ}; \mathcal F_{n-1} \otimes \FF_2)$ is not zero. 
\end{thm}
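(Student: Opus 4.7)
\medskip

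\noindent\textbf{Proof proposal.} The plan is to combine a Renaudineau--Shaw type spectral sequence for the real patchworked hypersurface $\R X_D$ with the tropical mirror symmetry isomorphism of Theorem~\ref{thm1MR24}. The spectral sequence takes as $E_1$ page the tropical homology groups $H_q(X_{\trop};\mathcal F_p\otimes\FF_2)$ of the tropical hypersurface $X_{\trop}\subset \T\Sigma_\Delta$, with differentials encoding the sign data that determines the divisor class $D$, and converges to $H_\ast(\R X_D;\FF_2)$.

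First I would reformulate connectedness via Poincar\'e duality: since $\R X_D$ is a closed $n$-manifold, it is connected if and only if $\dim_{\FF_2} H_n(\R X_D;\FF_2)=1$, and I would identify the entries of the $E_\infty$ page contributing to $H_n(\R X_D;\FF_2)$. Next I would exploit the vanishing hypothesis. Theorem~\ref{thm1MR24} shows that $H_n(X_{\trop};\mathcal F_k\otimes\FF_2)=0$ for $0<k<n$ is equivalent to the mirror vanishing $H_n(X^\circ_{\trop};\mathcal F_k\otimes\FF_2)=0$ for $0<k<n$. Thus on the top row only the classes $H_n(X_{\trop};\mathcal F_0)\cong H_n(X^\circ_{\trop};\mathcal F_n)=\FF_2$ and $H_n(X_{\trop};\mathcal F_n)\cong H_n(X^\circ_{\trop};\mathcal F_0)$ can contribute, so the analysis of $H_n(\R X_D;\FF_2)$ collapses to a single spectral sequence differential between these two groups.

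The key step is then to identify this differential geometrically. My strategy is to show that it is given by cap/cup product with a class naturally attached to the sign distribution defining $D$, namely the image in tropical cohomology of the divisor $D$ viewed on $\T\Sigma_T$. Using the combinatorial description of patchworking, I would express the differential in terms of how the sign of the center of $\Delta$ propagates across the top-dimensional cells of $X_{\trop}$ under sign flips. Transporting this pairing through the canonical isomorphism of Theorem~\ref{thm1MR24}, the operation on the $X_{\trop}$ side becomes a pairing on $X^\circ_{\trop}$ whose output is exactly the class $D_{|X^\circ_{\trop}}\in H_{n-1}(X^\circ_{\trop};\mathcal F_{n-1}\otimes\FF_2)$. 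Vanishing of the differential then becomes equivalent to vanishing of $D_{|X^\circ_{\trop}}$, and the spectral sequence yields $\dim H_n(\R X_D;\FF_2)=1$ precisely in the non-vanishing case.

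The main obstacle I foresee is the compatibility step: showing that the spectral sequence differential corresponds, under the mirror isomorphism, exactly to restriction of $D$ to $X^\circ_{\trop}$. This requires a careful dictionary between the patchworking sign data on $\Delta$ and the combinatorics of tropical divisors on the dual side, and is where the geometric content of the theorem really lies; the rest is bookkeeping once one knows that all other positions on the $E_\infty$ page have been forced to vanish by the hypothesis.
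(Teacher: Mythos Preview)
The present paper does not contain a proof of this theorem: it is quoted from \cite{MR24} and only invoked here (see the proofs of Corollaries~\ref{cor:firstBetti} and~\ref{cor:firstBetti2}). So there is no in-paper proof to compare your proposal against.

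That said, the hints the authors drop are consistent with your outline. In the proof of Corollary~\ref{cor:firstBetti2} they write that ``in the proof of Theorem~\ref{thm:conncomp} in \cite{MR24}'' a certain higher differential $\delta^{n-1}:E^{n-1}_{n,0}\to E^{n-1}_{1,1}$ is shown to be always zero; and in the proof of Corollary~\ref{cor:firstBetti} they say that the first map $H^0(X_{\trop},\mathcal F^n)\to H^1(X_{\trop},\mathcal F^{n-1})$ vanishes if and only if $D_{|X^\circ_{\trop}}=0$ ``by Theorem~\ref{thm:conncomp}''. Both remarks confirm that the argument in \cite{MR24} runs through the patchworking spectral sequence and identifies the relevant differential with the restricted divisor class under the mirror isomorphism, exactly as you propose. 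Your identification of the ``main obstacle'' --- matching the spectral sequence differential with $D_{|X^\circ_{\trop}}$ via the mirror map --- is indeed where the substance lies, and is what Theorem~\ref{thm:firstBetti} of the present paper refines one step further.
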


\subsection{Cohomological patchworking spectral sequence}
In the forthcoming paper \cite{MR26}, we will shift our focus from tropical homology to tropical cohomology. For this purpose we will consider the cohomological counterpart of the patchworking spectral sequence corresponding to the distribution of signs introduced in~\cite{RS}. A detailed construction of this spectral sequence can be found in \cite{chenal2023}. For convenience, we recall here its main properties. The first page is given by the tropical cohomology groups
\[ E^1_{p,q} = H^q(X_{\trop}, \mathcal F^p \otimes \FF_2)\]
The differentials on the $k$-th page are 
\[ \delta^k: E^k_{p,q} \rightarrow E^k_{p-k,q+1}. \]
The spectral sequence converges to the cohomology of the corresponding real variety $\R X$, i.e.
\[ H^q(\R X, \FF_2) \cong \sum_{p} E^{\infty}_{p,q}. \]

\subsection{First Betti number of real Calabi-Yau patchworking}
In  \cite{MR26} we also prove the cohomological analogue of Theorem \ref{thm1MR24}:
\begin{thm}\label{thm:cohommirroriso}
    Let $A$ be a commutative ring. For all $p$ and $q$ there are canonical isomorphisms
\begin{equation} \label{trop_ms}
H^q(X_{\trop}; \mathcal F^p\otimes A) \cong H^q(X^\circ_{\trop}; \mathcal F^{n-p}\otimes A).
\end{equation}
\end{thm}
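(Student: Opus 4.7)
The plan is to deduce Theorem~\ref{thm:cohommirroriso} from its homological counterpart (Theorem~\ref{thm1MR24}) by identifying the cochain complex that computes $H^q(X_{\trop}; \mathcal F^p \otimes A)$ with the $A$-linear dual of the chain complex that computes $H_q(X_{\trop}; \mathcal F_p \otimes A)$, and then dualizing the chain-level isomorphism already constructed in \cite{MR24}.

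First I would fix cellular models. Both $X_{\trop}$ and $X_{\trop}^{\circ}$ are finite polyhedral complexes, and for each cell $\sigma$ the stalk $\mathcal F_p(\sigma) = \bigwedge^p T_\sigma$ is a free $\Z$-module of finite rank whose $\Z$-linear dual is $\mathcal F^p(\sigma) = \bigwedge^p T_\sigma^\ast$. The resulting perfect pairing $\mathcal F^p(\sigma) \otimes_\Z \mathcal F_p(\sigma) \to \Z$ globalises to an identification of the cellular cochain complex computing $H^\bullet(X_{\trop}; \mathcal F^p \otimes A)$ with $\Hom_A(C_\bullet(X_{\trop}; \mathcal F_p \otimes A), A)$, once coherent orientations on the cells are fixed. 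Since the chain complex is bounded and degreewise free over $\Z$, this identification is well-behaved under extension of scalars to any commutative ring $A$.

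Next I would revisit the chain-level argument of \cite{MR24}. The proof of the homological mirror isomorphism there proceeds via a combinatorial bijection between the cells of $X_{\trop}$ and those of $X_{\trop}^{\circ}$, induced by the duality between the primitive central subdivisions of $\Delta$ and $\Delta^{\circ}$, under which the stalk $\mathcal F_p$ on $X_{\trop}$ is matched with $\mathcal F_{n-p}$ on $X_{\trop}^{\circ}$. This yields a genuine isomorphism $f_\bullet \colon C_\bullet(X_{\trop}; \mathcal F_p) \to C_\bullet(X_{\trop}^{\circ}; \mathcal F_{n-p})$ of cellular chain complexes; tensoring with $A$ and taking the $A$-linear dual produces a cochain-level isomorphism, which passes to the desired isomorphism on cohomology.

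The main obstacle I anticipate lies in the compatibility of the mirror map with the pairings $\mathcal F^p \otimes \mathcal F_p \to \Z$ on the two sides: one must verify that the cell-by-cell identifications of \cite{MR24} are not merely isomorphisms of modules but also intertwine these pairings, so that the transpose of $f_\bullet$ really implements the map predicted by the index shift $p \mapsto n-p$. An alternative route avoids this bookkeeping by invoking tropical Poincaré duality for smooth tropical manifolds, which supplies a natural isomorphism $H^q(X_{\trop}; \mathcal F^p \otimes A) \cong H_{n-q}(X_{\trop}; \mathcal F_{n-p} \otimes A)$; chaining this with Theorem~\ref{thm1MR24} and the corresponding duality on $X_{\trop}^{\circ}$ yields the statement. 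The cost there is first to establish tropical Poincaré duality with arbitrary ring coefficients for the Calabi--Yau hypersurfaces under consideration, which should follow from the fact that the relevant cellular complexes are bounded and degreewise free over~$\Z$.
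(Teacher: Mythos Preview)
The paper does not prove Theorem~\ref{thm:cohommirroriso}: it is stated with a forward reference to the forthcoming article \cite{MR26} (``In \cite{MR26} we also prove the cohomological analogue of Theorem~\ref{thm1MR24}''), and the present paper only uses it as input for Corollaries~\ref{cor:firstBetti} and~\ref{cor:firstBetti2}. There is therefore no proof here to compare your proposal against.

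On the substance of your sketch: the first route is sound, but your stated ``main obstacle'' is not one. If $f_\bullet$ is a genuine isomorphism of chain complexes of finitely generated free $\Z$-modules, then applying $\Hom_A(-\otimes A,\,A)$ is automatically an isomorphism of cochain complexes; since $\mathcal F^p$ is \emph{defined} as $\Hom(\mathcal F_p,\Z)$ (see the paragraph before Lemma~\ref{lem:tropsheaf}), the transpose of $f_\bullet$ lands in the correct cochain complex with the correct index shift without any separate ``pairing compatibility'' to check. The real point you are glossing over is a step earlier: you assert that \cite{MR24} furnishes a chain-level isomorphism, not merely one on homology. That is what actually needs to be extracted or reproved; if only a homology isomorphism is available, dualising it does not by itself yield a cohomology isomorphism over an arbitrary ring. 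You should also say one word about why the stalks $\mathcal F_p(\tau,\sigma)$ are free over~$\Z$ (they are saturated sublattices in the primitive setting), since this is what makes the dualisation and base change behave well. Your alternative via tropical Poincar\'e duality is also viable, with the same caveat that integral PD for $X_{\trop}$ and $X_{\trop}^\circ$ must be justified in the non-combinatorially-ample case.
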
 

Within this framework, we pursue the study of the topology of patchworkings arising from reflexive polytopes equipped with unimodular central triangulations and sign distributions on the vertices of these triangulations. As recalled earlier, such a choice of signs defines a tropical divisor 
\[
D \in H^{1}(\mathbb{T}\Sigma_{T}, \mathcal{F}^{1}\otimes \FF_2),
\]
whose restriction to \( H^{1}(X^\circ_{\trop}, \mathcal{F}^{1}\otimes \FF_2) \) is denoted by \( D\vert_{X^\circ_{\trop}} \).
The main result in~\cite{MR26} is the following theorem:

\begin{thm}[\cite{MR26}]\label{thm:firstBetti}
Let $\delta^1_{D}$ be the differential of the first page of the cohomological patchworking spectral sequence associated to the divisor $D$. Then the following diagram is commutative
\[\begin{tikzcd}
H^{1}(X_{\trop},\mathcal{F}^{n-1}\otimes\FF_2) \arrow{r}{\delta^1_{D}} \arrow{d}{\cong} & H^{2}(X_{\trop},\mathcal{F}^{n-2}\otimes\FF_2) \arrow{d}{\cong} \\
H^{1}(X_{\trop}^{\circ},\mathcal{F}^{1}\otimes\FF_2) \arrow{r}{\cup_D} & H^{2}(X_{\trop}^{\circ},\mathcal{F}^{2}\otimes\FF_2),
\end{tikzcd}
\]
where the vertical maps are the mirror maps from Theorem \ref{thm:cohommirroriso} and
$$
\cup_D(\beta)=\beta^2+D\vert_{X_{\trop}^{\circ}}\cup\beta.
$$
\end{thm}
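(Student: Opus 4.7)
The plan is to prove the commutativity by giving explicit cochain-level descriptions of $\delta^1_D$ and of the mirror isomorphism of Theorem \ref{thm:cohommirroriso}, and then matching them cell by cell. First I would make $\delta^1_D$ explicit. The cohomological patchworking spectral sequence, obtained by dualizing the Renaudineau--Shustin construction of \cite{RS} following \cite{chenal2023}, has a first differential that admits a concrete cellular formula in terms of the sign distribution defining $D$. I expect this formula to decompose into two pieces: a linear piece given by cup product with a cellular representative of $D$, and a second ``squaring'' piece coming from the diagonal of the chain complex, which is $\FF_2$-linear on degree-one classes because of graded commutativity of the tropical cup product.

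Next, I would invoke the explicit chain-level form of the mirror isomorphism of Theorem \ref{thm:cohommirroriso}. This isomorphism arises from the combinatorial duality between the primitive central subdivisions of $\Delta$ and $\Delta^{\circ}$: it identifies cells of $X_{\trop}$ with cells of complementary dimension of $X^\circ_{\trop}$ and accordingly identifies the cosheaf $\mathcal{F}^p$ on $X_{\trop}$ with $\mathcal{F}^{n-p}$ on $X^\circ_{\trop}$. Since the divisor $D$ is built uniformly from the sign data that governs the patchworking, the mirror map sends its restriction to $X_{\trop}$ to the restriction $D|_{X^\circ_{\trop}}$ on the mirror side. Then, by naturality of the tropical cup product under this cellular identification, the linear piece of $\delta^1_D(\alpha)$ is matched with the term $D|_{X^\circ_{\trop}} \cup \beta$ on the right.

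The principal obstacle is to show that the squaring piece of $\delta^1_D(\alpha)$ corresponds under the mirror isomorphism to the honest tropical cup square $\beta \cup \beta$. Cup products of cellular cochains depend on a choice of diagonal approximation and are only canonical up to cochain homotopy, so formal naturality is not enough. My plan is to construct compatible cellular diagonal approximations on $X_{\trop}$ and $X^\circ_{\trop}$, exploiting the intrinsic self-duality of the reflexive combinatorial data, and to verify the identification of the squaring operations locally on each pair of dual cells. Because both the patchworking diagonal piece and the cup square reduce on cellular cochains to explicit combinatorial operations indexed by the same simplicial data coming from the primitive central triangulations, the matching should amount to a finite, cell-by-cell check once the compatible diagonals are fixed. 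Combining the two identifications will then yield the commutativity of the diagram.
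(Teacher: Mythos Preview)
The paper does not contain a proof of this theorem. Theorem~\ref{thm:firstBetti} is explicitly attributed to the forthcoming paper~\cite{MR26}, and the abstract states that ``the proof of this result will appear elsewhere''; the present paper only uses the theorem as a black box to derive Corollaries~\ref{cor:firstBetti} and~\ref{cor:firstBetti2} and the applications in Sections~\ref{sec:cube}--\ref{sec:globalex}. There is therefore no proof in the paper against which your proposal can be compared.

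As for your outline itself, it is a plausible high-level strategy, but a few points deserve care. First, your description of the mirror isomorphism as ``identifying cells of $X_{\trop}$ with cells of complementary dimension of $X^\circ_{\trop}$'' is too coarse: in \cite{MR24} the isomorphism is built on the posets $\mathcal P^1(T^\circ,T)$ and $\mathcal P^1(T,T^\circ)$ and is not a simple cell-by-cell matching, so naturality of cup products is genuinely nontrivial. Second, there is a small confusion in your handling of $D$: the divisor $D$ lives in $H^1(\T\Sigma_T,\mathcal F^1\otimes\FF_2)$, hence on the ambient toric variety containing $X^\circ_{\trop}$, not on $X_{\trop}$; the sign distribution on $\Delta\cap M$ simultaneously encodes the patchworking of $X_{\trop}$ and the divisor on the mirror side, and it is this double role (rather than ``the mirror map sending the restriction of $D$'') that must be exploited. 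Finally, your proposal is a plan rather than a proof: the core step---constructing compatible diagonal approximations and carrying out the local check for the squaring term---is asserted but not performed, and that is where the actual content lies.
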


In certain cases this theorem is sufficient to compute the first Betti number (with $\FF_2$ coefficients) of $\R X_D$. Recall that the real locus $\R X$ is called maximal if equality holds in the Smith-Thom inequality $\sum_{i} b_i(\R X) \leq \sum_{i} b_i(\C X)$.

\begin{cor}\label{cor:firstBetti}
    Suppose that either \(\Delta\) or $\Delta^\circ$ is a non-singular po\-ly\-tope and that there exists a divisor \(D\in H^1(\T\Sigma_T,\mathcal{F}^1\otimes\FF_2)\) such that
    \begin{equation} \label{d1_zero}
        \beta^2 + D\vert_{X_{\trop}^{\circ}} \cup \beta \equiv 0 \pmod{2} \quad \text{for all} \ \  \beta\in H^1(X^\circ_{\trop};\F^1\otimes\FF_2).
    \end{equation}
If the cohomological patchworking spectral sequence associated to the divisor $D$ degenerates at the second page, we have two cases
    \begin{itemize}
        \item if $D|_{X^\circ_{\trop}} =0 $ then \(\mathbb{R}X_D\) has two connected components and 
         \[
        b_1(\mathbb{R}X_D) = h^{n-1,1}(X)+h^{1,1}(X).
        \]
        \item if $D|_{X^\circ_{\trop}} \neq 0$ then \(\mathbb{R}X_D\) is connected and
        \[
        b_1(\mathbb{R}X_D) = h^{n-1,1}(X)+h^{1,1}(X)-1.
        \]
 \end{itemize}
\end{cor}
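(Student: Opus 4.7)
The plan is to read off $b_1(\R X_D)$ from the cohomological patchworking spectral sequence. Degeneration at the second page identifies $E^{\infty}$ with $E^{2}$, so
\[
b_1(\R X_D) = \sum_p \dim_{\FF_2} E^{2}_{p,1},
\]
and the task reduces to identifying the nonzero $E^{1}_{p,1}$ and controlling every $\delta^1$ meeting the $q=1$ row. First I would use the non-singularity of $\Delta$ or $\Delta^{\circ}$, together with Theorem \ref{thm:cohommirroriso} to transport information between $X_{\trop}$ and $X^{\circ}_{\trop}$, to invoke the known matching of tropical cohomology with Hodge numbers in the smooth setting. Batyrev-type vanishings then force $E^{1}_{p,1}=0$ unless $p\in\{1,n-1\}$, with $\dim E^{1}_{1,1}+\dim E^{1}_{n-1,1}=h^{1,1}(X)+h^{n-1,1}(X)$ after using the Hodge mirror relation. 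Similarly $E^{1}_{p,0}=0$ unless $p\in\{0,n\}$ (both one-dimensional), and $E^{1}_{0,2}=0$.

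Second, I would check that of the four $\delta^1$-differentials which meet the $q=1$ row, three vanish automatically and the fourth is the decisive one. The differential $\delta^1:E^{1}_{1,1}\to E^{1}_{0,2}$ vanishes because its target is zero; the differential $\delta^1:E^{1}_{2,0}\to E^{1}_{1,1}$ vanishes because its source is zero; and the differential $\delta^1:E^{1}_{n-1,1}\to E^{1}_{n-2,2}$ is precisely the top horizontal map of the square in Theorem \ref{thm:firstBetti}, so under the mirror isomorphisms it becomes $\cup_D$ and vanishes by hypothesis \eqref{d1_zero}. This leaves
\[
\delta^1: E^{1}_{n,0}\longrightarrow E^{1}_{n-1,1},
\]
whose source is one-dimensional over $\FF_2$, so this map is either zero or injective.

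Third, I would pin down this last differential via the $q=0$ row. Degeneration forces
\[
b_0(\R X_D)=\dim E^{\infty}_{0,0}+\dim E^{\infty}_{n,0}=1+\dim\ker\bigl(\delta^1:E^{1}_{n,0}\to E^{1}_{n-1,1}\bigr).
\]
Theorem \ref{thm:conncomp} now settles the issue: its vanishing hypothesis on $H_n(X_{\trop},\mathcal F_k\otimes\FF_2)$ for $0<k<n$ follows from the same Hodge-theoretic considerations combined with Theorem \ref{thm1MR24}, and it asserts that $\R X_D$ is connected precisely when $D\vert_{X^{\circ}_{\trop}}\neq 0$. Thus $\delta^1:E^{1}_{n,0}\to E^{1}_{n-1,1}$ is injective exactly when $D\vert_{X^{\circ}_{\trop}}\neq 0$ and is zero exactly when $D\vert_{X^{\circ}_{\trop}}=0$, yielding
\[
b_1(\R X_D)=h^{1,1}(X)+h^{n-1,1}(X)-\dim\mathrm{im}\bigl(\delta^1:E^{1}_{n,0}\to E^{1}_{n-1,1}\bigr),
\]
which specializes to the two stated bullets (together with the corresponding statements about the number of connected components).

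The hard part is really the first step: reliably identifying $\dim_{\FF_2}H^q(X_{\trop},\mathcal F^p\otimes\FF_2)$ with Hodge numbers of $X$ (or equivalently of $X^{\circ}$) in the non-singular setting, and deducing the Batyrev-type vanishings that confine the $q=1$ row to two columns. Once those vanishings are in hand, the remainder is a careful accounting on the $E^2$ page, with Theorem \ref{thm:firstBetti} and Theorem \ref{thm:conncomp} each neutralizing one of the remaining differentials.
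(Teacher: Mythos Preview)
Your proposal is correct and follows essentially the same route as the paper: identify the tropical cohomology groups with Hodge numbers via the Lefschetz-type results for the non-singular side (transported to the mirror by Theorem~\ref{thm:cohommirroriso}), so that only $E^{1}_{1,1}$ and $E^{1}_{n-1,1}$ survive on the $q=1$ row, then kill $\delta^{1}:E^{1}_{n-1,1}\to E^{1}_{n-2,2}$ by Theorem~\ref{thm:firstBetti} together with \eqref{d1_zero}, and use Theorem~\ref{thm:conncomp} to decide whether $\delta^{1}:E^{1}_{n,0}\to E^{1}_{n-1,1}$ is zero or injective. Your treatment is in fact slightly more explicit than the paper's in checking that the vanishing hypothesis of Theorem~\ref{thm:conncomp} is met and that the remaining differentials into and out of $E^{1}_{1,1}$ are trivially zero.
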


\begin{proof}
    Suppose that $\Delta$ is non-singular. Then $X_{\trop}$ is combinatorially ample and by the Lefschetz hyperplane theorem in \cite{ARS}, the only non-zero tropical Hodge numbers of $X_{\trop}$ are those of type $(j,j)$ and $(j,n-j)$ and they coincide with the standard Hodge numbers of $X$.
Moreover, it also follows from \cite{ARS} and the universal coefficients theorem, that the tropical cohomology groups over $\Z$ have no torsion, hence also $\dim H^j(X_{\trop}, \mathcal F^k \otimes \FF_2) = h^{j,k}(X)$. By Theorem \ref{thm1MR24} and the classical mirror symmetry identity \eqref{ms_hodge}, the same is true for $X^\circ_{\trop}$, $\dim H^j(X^\circ_{\trop}, \mathcal F^k \otimes \FF_2) = h^{j,k}(X^{\circ})$. 

This implies that the only non-trivial maps in the first page of the spectral sequence which contribute to $b_1(\R X)$ are
\[ 
 H^0(X_{\trop}, \mathcal F^n \otimes \FF^2) \stackrel{\delta^1}{\longrightarrow} H^1(X_{\trop}, \mathcal F^{n-1} \otimes \FF^2) \stackrel{\delta^1}{\longrightarrow} H^2(X_{\trop}, \mathcal F^{n-2} \otimes \FF^2). 
 \]
The first map is zero if and only if $D|_{X^\circ_{\trop}} =0$ by Theorem \ref{thm:conncomp}. The second map is computed via Theorem \ref{thm:firstBetti} and condition \eqref{d1_zero} implies that it is zero. The other term which contributes to $b_1(\R X)$ is $E^1_{1,1} =  H^1(X_{\trop}, \mathcal F^1 \otimes \FF^2)$ and it does not change in the second page. The result then follows since $H^0(X_{\trop}, \mathcal F^n \otimes \FF^2) \cong \Z_2$.
\end{proof}

\begin{cor} \label{cor:firstBetti2}
     If $\Delta$ and $\Delta^\circ$ are $4$ or $5$ dimensional ($n=3$ or $4$) and at least one of them is non-singular, then $E^\infty_{n-1,1} = E^2_{n-1,1}$ and $E^{\infty}_{1,1} = E^2_{1,1}$. Therefore, in these dimensions, if \eqref{d1_zero} holds, we have
     \begin{itemize}
        \item if $D|_{X^\circ_{\trop}} =0 $, then \(\mathbb{R}X_D\) has two connected components and 
         \[
        b_1(\mathbb{R}X_D) = h^{n-1,1}(X)+h^{1,1}(X);
        \]
        \item if $D|_{X^\circ_{\trop}} \neq 0$, then \(\mathbb{R}X_D\) is connected and
        \[
        b_1(\mathbb{R}X_D) = h^{n-1,1}(X)+h^{1,1}(X)-1.
        \]
     \end{itemize}
     In the first case $\R X$ is maximal. 
\end{cor}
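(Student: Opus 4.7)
The plan is to first establish the two stability identities $E^\infty_{n-1,1}=E^2_{n-1,1}$ and $E^\infty_{1,1}=E^2_{1,1}$ unconditionally in the low-dimensional sparse setting, then to deduce the Betti-number formulas by mimicking the argument of Corollary \ref{cor:firstBetti}, and finally to derive maximality from $\FF_2$-Poincar\'e duality. Without loss of generality I assume $\Delta$ is non-singular (the other case follows by the mirror swap of Theorem \ref{thm:cohommirroriso}), so that, as in the proof of Corollary \ref{cor:firstBetti}, $E^1_{p,q}$ is supported on the diagonals $q=p$ and $q=n-p$, where it equals $h^{p,q}(X)$. In $n\in\{3,4\}$ a direct check shows that every target $(n-1-k,2)$ of an outgoing $\delta^k$ from $E^k_{n-1,1}$ with $k\geq 2$ lies off the two diagonals or has $p<0$, while every source $(n-1+k,0)$ of an incoming $\delta^k$ has $p>n$; hence $E^\infty_{n-1,1}=E^2_{n-1,1}$.

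For $E^\infty_{1,1}$, outgoing higher differentials are trivially zero, and an incoming $\delta^k:E^k_{1+k,0}\to E^k_{1,1}$ has a source on an allowed diagonal only when $1+k=n$, leaving the single potential obstruction $\delta^{n-1}:E^{n-1}_{n,0}\to E^{n-1}_{1,1}$. I handle it via Theorem \ref{thm:conncomp}, whose hypothesis $H_n(X_{\trop},\F_k\otimes\FF_2)=0$ for $0<k<n$ is a consequence of tropical Poincar\'e duality together with the diagonal support of the tropical cohomology. If $D\vert_{X^\circ_{\trop}}\neq 0$, then $\R X_D$ is connected, so $b_0=\dim E^\infty_{0,0}+\dim E^\infty_{n,0}=1$ forces $E^\infty_{n,0}=0$, and since $\dim E^1_{n,0}=1$ this kills $E^k_{n,0}$ from page two onward. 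If $D\vert_{X^\circ_{\trop}}=0$, then $\R X_D$ is disconnected, so $b_0\geq 2$; combined with $E^\infty_{0,0}=\FF_2$ and $\dim E^1_{n,0}=1$, this forces $E^\infty_{n,0}=E^1_{n,0}=\FF_2$, and hence every differential leaving any $E^k_{n,0}$ vanishes, in particular $\delta^{n-1}$.

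With both stabilities established, I replay the proof of Corollary \ref{cor:firstBetti}: condition \eqref{d1_zero} kills the outgoing $\delta^1$ at $(n-1,1)$, the incoming $\delta^1$ from $(n,0)$ contributes a one-dimensional image exactly when $D\vert_{X^\circ_{\trop}}\neq 0$, and the vanishing $\delta^1$'s at $(1,1)$ give $E^2_{1,1}=h^{1,1}(X)$; summing yields the stated $b_1$ formulas, and the bound $\dim E^\infty_{n,0}\leq 1$ pins down the number of connected components. For maximality in the first case, $\FF_2$-Poincar\'e duality on the closed $n$-manifold $\R X_D$ gives $b_i=b_{n-i}$. For $n=3$ this immediately yields $\sum_i b_i(\R X_D)=4+2h^{2,1}(X)+2h^{1,1}(X)=\sum_i b_i(\C X)$. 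For $n=4$ one additionally shows $E^\infty_{2,2}=h^{2,2}(X)$: condition \eqref{d1_zero} together with its tropical Poincar\'e dual kills both page-one differentials incident to $(2,2)$, while all higher differentials there land off the diagonals. The subtlest step is the Case B vanishing of $\delta^{n-1}$, where the connectedness input of Theorem \ref{thm:conncomp} must be converted via a dimension count into the required vanishing of a higher-page differential.
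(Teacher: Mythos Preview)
Your argument has a genuine gap in the $D\vert_{X^\circ_{\trop}}\neq 0$ branch of the stability of $E^k_{1,1}$. From connectedness you correctly deduce $E^\infty_{n,0}=0$, but the inference ``since $\dim E^1_{n,0}=1$ this kills $E^k_{n,0}$ from page two onward'' is a non sequitur: the only possible outgoing differentials from $(n,0)$ land at $(n-1,1)$ (page $1$) and at $(1,1)$ (page $n-1$), so nothing prevents the generator of $E^1_{n,0}$ from surviving untouched to page $n-1$ and being killed precisely by $\delta^{n-1}$---the very map you are trying to show vanishes. The easy fix is already in your hands: you later use that $\delta^1\colon E^1_{n,0}\to E^1_{n-1,1}$ is nonzero exactly when $D\vert_{X^\circ_{\trop}}\neq 0$ (this is how the proof of Corollary~\ref{cor:firstBetti} reads Theorem~\ref{thm:conncomp}); invoking that here gives $E^2_{n,0}=0$ directly and hence $\delta^{n-1}=0$. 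The paper bypasses the whole case analysis by citing from \cite{MR24} that the map $\delta^{n-1}\colon E^{n-1}_{n,0}\to E^{n-1}_{1,1}$ is \emph{always} zero, independently of $D$.

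Your treatment of maximality for $n=4$ also diverges from the paper. You aim for $E^\infty_{2,2}=h^{2,2}(X)$ by killing both page-one differentials at $(2,2)$ via ``tropical Poincar\'e duality'' of the spectral sequence; this self-duality of the $\delta^1$'s is plausible but you do not justify it, and it is not a statement available elsewhere in the paper. The paper instead uses the identity $\chi(\R X)=\sigma(X)$ (valid for real loci of complex varieties, and visible from the spectral sequence as in \cite{RS}) to solve for $b_2$ from the already-computed $b_0$ and $b_1$, which avoids any appeal to duality of the differentials. The remaining pieces of your argument---the diagonal support of $E^1$, the check that higher differentials at $(n-1,1)$ miss the support, the verification of the hypothesis of Theorem~\ref{thm:conncomp}, and the $n=3$ maximality via Poincar\'e duality---match the paper.
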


\begin{proof}
    The only non trivial higher maps in the patchworking spectral sequence which contribute to the computation of $b_1$ are $\delta^{n-1}: E^{n-1}_{n,0} \rightarrow E^{n-1}_{1,1}$ and $\delta^{n-3}: E^{n-3}_{n-1,1} \rightarrow E^{n-3}_{2,2}$. As for the first map, in the proof of Theorem \ref{thm:conncomp} in 
\cite{MR24}, we showed that it is always zero, in particular $E^{\infty}_{1,1} \cong E^{2}_{1,1} \cong H^1(X_{\trop}, \mathcal F^1 \otimes \FF_2)$. As for the second map, it does not exist for $n=3$ or $4$. Therefore $E^{\infty}_{n-1,1} = E^{2}_{n-1,1}$. 

In the case of the first bullet, for $n=3$, maximality follows from Poincaré duality. In the case $n=4$, we use the general fact that the Euler characteristic of $\R X$ is equal to the signature of $X$ \cite{BB2010}. This can be seen also from the spectral sequence (as observed in \cite{RS}), indeed the signature formula for a $2m$-dimensional K\"ahler manifold gives
\[ \sigma(X) = \sum_{p,q = 0}^{2m} (-1)^q h^{p,q} = \sum_{p,q = 0}^{2m} (-1)^q \dim E^1_{p,q}. \]
On the other hand, the right hand side is the Euler characteristic of the spectral sequence, which is stable on all of its pages. Therefore
\[ \sigma(X)  = \sum_{p,q = 0}^{2m} (-1)^q \dim E^\infty_{p,q} = \chi(\R X). \]
Assuming the first bullet, these formulas imply that 
$$b_2(\R X) = 2 h^{4,0}(X) + h^{2,2}(X)- 2h^{0,0}(X).$$ 
In other words, $\R X$ is maximal. 
\end{proof}

We point out that, in a different context, a  result similar to Theorem \ref{thm:firstBetti} was proved by Arguz-Prince in \cite{AP2020} for the zero divisor case and by the first author in \cite{Matessi2024}. These articles deal with anti-symplectic involutions which preserve a Lagrangian fibration on a $3$-dimensional Calabi-Yau manifold. 

\subsection{Applications}
The main goal of this paper is to give applications of Theorem \ref{thm:firstBetti}, Corollary \ref{cor:firstBetti} and Corollary \ref{cor:firstBetti2} to concrete examples. In Section \ref{sec:cube}, we focus on the case of the \((n+1)\)-cube \(\square = [-1,1]^{n+1}\) and its dual \(\square^\circ\). We first show that, when considering the zero divisor on \(\mathbb{T}\Sigma_{\square}\), the map on the first page of the patchworking spectral sequence for \(X_{\trop}^\circ\) is trivial for all $n$ (see Proposition~\ref{prop:zeromap}). We also examine the case of a non-zero divisor \(D\). In this situation, we show that for all $n>4$, the first Betti number of the patchworking associated with \(D\) coincides with \(h^{\,n-1,1}(X)\), where \(X\) is an anticanonical hypersurface in \((\mathbb{C}\mathbb{P}^1)^{\,n+1}\) (see Proposition~\ref{prop:nonzerodivisor} and Corollary~\ref{cor:Bettinumberdualcube}). In Section~\ref{sec:maxfirstBetti} we address the problem of finding a divisor \(D\) on \(X^\circ_{\trop}\) such that the corresponding map \(\delta^1_D\) in the patchworking spectral sequence for $X_{\trop}$ vanishes. Equivalently, by Theorem \ref{thm:firstBetti}, we seek a divisor \(D\) on $X^{\circ}_{\trop}$ satisfying
\[
\beta^2 + D\cup \beta = 0 \pmod{2}
\]
for every divisor \(\beta\). In Section~\ref{sec:localconf}, we provide sufficient local conditions ensuring the existence of such a divisor. We then present, in Section~\ref{sec:localexample}, explicit local examples of divisors in dimension three that meet these conditions, and finally show in Section~\ref{sec:globalex} that, in certain cases, one can construct global divisors satisfying all of them.
\begin{thm}\label{thm:introlast} Let $\Delta \subset M_{\R}$ be a smooth reflexive polytope of dimension $4$ satisfying one of the following hypothesis
\begin{itemize}
    \item for every $2$-dimensional face $F$ of $\Delta$, the boundary $\partial F$ contains only three parities;
    \item every edge of $\Delta$ has even length.
\end{itemize}
Then, for every primitive central triangulation $T$ of $\Delta$, there exists a divisor $D \in H^1(X_{\trop}^\circ, \mathcal F^1\otimes\FF_2)$ satisfying 
$$
\beta^2+D\cup \beta=0
$$ 
for all divisors $\beta$.
\end{thm}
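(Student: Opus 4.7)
The plan is to construct $D$ by giving an explicit parity-based sign assignment on the vertices of $T$ lying on $\partial\Delta$, and then to verify the vanishing of $\beta^2+D\cup\beta$ by reducing it to the sufficient local conditions established in Section~\ref{sec:localconf}. Since $\dim\Delta=4$, the tropical Calabi--Yau $X^\circ_{\trop}$ is built from the triangulation that $T$ induces on $\partial\Delta$, and its cohomology $H^1(X^\circ_{\trop},\mathcal{F}^1\otimes\FF_2)$, together with its cup product, is controlled by the $2$-faces of $\Delta$. Accordingly, the global relation $\beta^2+D\cup\beta=0$ should localize, via Section~\ref{sec:localconf}, to a condition on each $2$-face $F$ of $\Delta$ depending only on the triangulation of $F$ and on $D|_F$.

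First I would invoke the local criteria of Section~\ref{sec:localconf} to recast the theorem as the existence of compatible local data: on each $2$-face $F$ of $\partial\Delta$ a local divisor $D_F$ solving the face-wise version of $\beta^2+D\cup\beta=0$, together with gluing data ensuring that the $D_F$ assemble to a well-defined class in $H^1(X^\circ_{\trop},\mathcal{F}^1\otimes\FF_2)$. Next I would treat the two hypotheses in turn. Under the first one, every $2$-face $F$ has only three of the four possible parities on $\partial F$; the explicit local examples of Section~\ref{sec:localexample} then produce a divisor on $F$ supported on vertices of a distinguished parity class and solving the local relation. Under the second one, the even-length assumption provides every edge with a well-defined midpoint parity, and again Section~\ref{sec:localexample} supplies a compatible local model on each face. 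In both cases the hypothesis yields a uniform, purely combinatorial rule for assigning signs to the vertices of $T$ on $\partial\Delta$.

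The main obstacle is the global gluing: local solutions exist face by face, but agreement along shared edges is not automatic, and a purely local choice can fail to define a cocycle on $X^\circ_{\trop}$. The theorem's hypotheses are tailored precisely to this point: in the three-parity case one can coherently distinguish, across all $2$-faces, the same parity class that is preserved under the local rule, so that the assigned values match on common edges; in the even-edge case, the rule is entirely edge-intrinsic (it depends only on midpoint parity), hence automatically consistent on intersections. To conclude I would check that the resulting global sign distribution (i) restricts to each $D_F$ correctly, and (ii) defines a well-defined class in $H^1(X^\circ_{\trop},\mathcal{F}^1\otimes\FF_2)$. The verification of this cocycle condition along codimension-$1$ strata of $\partial\Delta$, together with the check that smoothness of $\Delta$ makes the local models of Section~\ref{sec:localexample} applicable at every vertex figure, is where I expect the bulk of the technical work to lie.
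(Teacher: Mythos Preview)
Your overall strategy matches the paper's: construct $L$ by a parity rule on the lattice points of $\Sk_2(\partial\Delta)$, then verify that for every $r\in\Sk_2(\partial\Delta)$ the local configuration $\mathcal L_r$ falls into Case~1 or Case~2 of \S\ref{sec:localconf}, using the admissible local models of \S\ref{sec:localexample}. There is no separate cocycle check to perform: once $L$ is specified as a subset of the vertex set of $\partial T$, it \emph{is} a toric divisor in $H^1(\T\Sigma_T,\mathcal F^1\otimes\FF_2)$, and its restriction to $X^\circ_{\trop}$ is automatic via Theorem~\ref{thm:surj}. The work is entirely in choosing $L$ so that every $r$ lands in an admissible pattern.

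Two points where your sketch diverges from what actually works. First, in the even-edge case your ``midpoint parity'' rule is not well defined: the midpoint of an edge of length $\ell$ has parity $[v]+(\ell/2)[u]$, which depends on $\ell/2\bmod 2$. The correct (and simpler) observation is that when all edges have even length, all \emph{vertices} of $\Delta$ share a single parity $p$; one then takes $L=\{r\in\Sk_2(\partial\Delta)\cap M:[r]=p\}$, and every vertex becomes a simple vertex, every edge a simple edge (Examples~\ref{simple_vertex},~\ref{simple_edge}). Second, in the three-parity case there is no single parity class that works on every $2$-face, so ``coherently distinguish the same parity class'' does not go through. The paper instead fixes a base vertex $v_0$ with $p=[v_0]$ and, on each $2$-face $F$, takes the $p$-class if $p\in[F]$ and otherwise the unique parity of $[F]$ missing from $\partial F$. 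The substantive argument---which your sketch does not contain---is a propagation along the edge graph of $\Delta$ showing that, starting from the simple vertex $v_0$, every vertex reached is either simple or pointed (Example~\ref{pointed_vertex}) and every edge is simple or empty. That case analysis (simple $\leadsto$ simple/pointed; pointed $\leadsto$ simple/pointed, separating the distinguished simple edge $E_s$ from the empty edges) is where the three-parity hypothesis is actually used, and it is the core of the proof.
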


\begin{cor}
    Under the same assumptions as in Theorem~\ref{thm:introlast}, for any primitive triangulation \(T\) of \(\Delta\) there exists a divisor \(D\) such that the real hypersurface \(\mathbb{R}X_D\) is connected and
    \[
        b_1(\mathbb{R}X_D) = h^{1,2}(X)+h^{1,1}(X)-1.
    \]
\end{cor}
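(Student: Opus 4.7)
The plan is to combine Theorem~\ref{thm:introlast} with Corollary~\ref{cor:firstBetti2} in the case $n=3$. Since $\Delta$ is a smooth reflexive polytope of dimension $n+1=4$, the non-singularity and low-dimensional hypotheses of Corollary~\ref{cor:firstBetti2} are satisfied. The first step is to invoke Theorem~\ref{thm:introlast} on the given primitive central triangulation $T$ to produce a divisor $D\in H^1(X^\circ_{\trop},\mathcal F^1\otimes\FF_2)$ satisfying $\beta^2+D\cup\beta=0$ for every $\beta$. Via the commutative square of Theorem~\ref{thm:firstBetti}, this identity transports to the vanishing of $\delta^1_D: H^1(X_{\trop},\mathcal F^{n-1}\otimes\FF_2)\to H^2(X_{\trop},\mathcal F^{n-2}\otimes\FF_2)$, which is precisely condition~\eqref{d1_zero}. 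Hence Corollary~\ref{cor:firstBetti2} applies.

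The main potential obstacle is to land in the second bullet of Corollary~\ref{cor:firstBetti2} rather than the first, i.e.\ to ensure that $D\vert_{X^\circ_{\trop}}\neq 0$. I would address this by inspecting the explicit global divisor constructed in Section~\ref{sec:globalex} from the local pieces of Section~\ref{sec:localexample}: this divisor is assembled as a sum of toric boundary contributions corresponding to an explicit choice of sign distribution, and one checks directly from the construction that its restriction to $X^\circ_{\trop}$ is non-trivial as a tropical cohomology class. Thus the $D$ furnished by the proof of Theorem~\ref{thm:introlast} satisfies $D\vert_{X^\circ_{\trop}}\neq 0$ in addition to the cup-condition.

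Granting $D\vert_{X^\circ_{\trop}}\neq 0$, the second bullet of Corollary~\ref{cor:firstBetti2} directly yields that $\mathbb{R}X_D$ is connected and $b_1(\mathbb{R}X_D)=h^{n-1,1}(X)+h^{1,1}(X)-1$. Specializing to $n=3$ and invoking Hodge symmetry $h^{2,1}(X)=h^{1,2}(X)$ rewrites the right-hand side as $h^{1,2}(X)+h^{1,1}(X)-1$, the desired formula.
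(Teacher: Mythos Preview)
Your approach is exactly what the paper intends: the corollary is stated without proof (both in the introduction after Theorem~\ref{thm:introlast} and in Section~\ref{sec:globalex} after Theorem~\ref{dim3_maximal}), as a direct combination of the theorem with Corollary~\ref{cor:firstBetti2} for $n=3$. So there is no separate argument in the paper to compare against.

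You are right that the only substantive point is landing in the second bullet of Corollary~\ref{cor:firstBetti2}, i.e.\ ensuring $D|_{X^\circ_{\trop}}\neq 0$. Your treatment of this is a pointer rather than a proof: saying ``one checks directly from the construction'' does not actually carry out the check. The divisor $L$ built in the proof of Theorem~\ref{dim3_maximal} is a specific nonempty sum of toric boundary components, but to conclude that its class in $H^1(X^\circ_{\trop},\mathcal F^1\otimes\FF_2)$ is nonzero one must rule out that $L$ is linearly equivalent over $\FF_2$ to a combination of principal divisors and divisors supported on facet interiors (the latter restrict to zero by Remark~\ref{rem:divinfacet}). The paper does not perform this verification either, so your write-up is at the same level of detail as the paper's implicit argument; but if you want a complete proof you should either do this check in the explicit parity-based construction of Section~\ref{sec:globalex}, or give a general reason why that construction always yields a nontrivial restricted class.
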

\subsection*{Acknowledgments}
We thank Matthias Franz for pointing out the work \cite{Masuda2006}.  Diego Matessi was partially supported by the national research project ``Moduli spaces and special varieties'' (PRIN 2022) and he is a member of the INDAM research group GNSAGA. Arthur Renaudineau acknowledges the support of the CDP C2EMPI, as well as the French State under the France-2030 programme, the University of Lille, the Initiative of Excellence of the University of Lille, the European Metropolis of Lille for their funding and support of the R-CDP-24-004-C2EMPI project, and also the ANR, project ANR-22-CE40-0014. Both authors wish to thank the FBK-CIRM and the Department of Mathematics at the University of Trento for their support and hospitality during the ``Research in pairs program''.

\section{Tropical mirror pairs and cohomology}
We briefly recall in this section the fundamental definitions from~\cite{MR24}, to which we refer for additional details and illustrative examples. In the following $M \cong \Z^{n+1}$ is a lattice, $N = \Hom(M, \Z)$ is the dual lattice and $M_{\R} = M \otimes \R$. Given a fan $\Sigma$ (either in $M_{\R}$ or $N_{\R}$) we denote by $\C \Sigma$ and $\T \Sigma$ the corresponding complex toric variety or tropical toric variety, respectively. 

\subsection{Reflexive Polytopes and Mirror Symmetry}\label{subsection:mirrordefi}

A lattice polytope $\Delta \subset M_{\R}$ is called \emph{reflexive} if it can be written as
\[
\Delta = \{ u \in M_{\R} \mid \langle v, u \rangle \le 1, \; v \in N \}.
\]
 In particular, this implies that the only interior lattice point of $\Delta$ is the origin:
\[
\operatorname{int}(\Delta) \cap M = \{ 0 \}.
\]
The \emph{dual polytope} $\Delta^{\circ} \subset N_{\R}$ is defined by
\[
\Delta^{\circ} = \{ v \in N_{\R} \mid \langle v, u \rangle \le 1, \; \forall u \in \Delta \}.
\]
If $\Delta$ is reflexive, then its dual $\Delta^{\circ}$ is also a reflexive lattice polytope, and moreover $(\Delta^{\circ})^{\circ} = \Delta$.

A reflexive polytope $\Delta \subset M_{\R}$ defines a fan in the same space, called its \emph{face fan}, denoted by $\Xi_{\Delta}$, whose cones are generated by the faces of $\Delta$. These fans satisfy
\[
\Xi_{\Delta} = \Sigma_{\Delta^{\circ}} \quad \text{and} \quad \Xi_{\Delta^{\circ}} = \Sigma_{\Delta},
\]
where $\Sigma_{\Delta}$ and $\Sigma_{\Delta^\circ}$ denote the respective normal fans.
\begin{defi}
A lattice simplex is called \emph{primitive} (or \emph{unimodular}) if its volume equals $\frac{1}{(n+1)!}$, the minimal possible volume among lattice simplices.
\end{defi}

If $\Delta$ is a non-singular reflexive polytope, the cones of $\Xi_{\Delta^{\circ}}$ are regular. Consequently, all faces of $\Delta^{\circ}$ are simplices with no interior lattice points. Moreover, if $\sigma$ is a facet of $\Delta^{\circ}$, then the convex hull of the origin in $N$ together with $\sigma$ forms a primitive simplex.

Given a pair of dual reflexive polytopes $(\Delta, \Delta^{\circ})$, one may consider the \emph{anticanonical hypersurfaces}
\[
X \subset \C\Sigma_{\Delta}, \qquad X^{\circ} \subset \C\Sigma_{\Delta^{\circ}}.
\]
Both $X$ and $X^{\circ}$ are Calabi--Yau varieties, forming a \emph{mirror pair}. Typically, both the hypersurfaces and their ambient toric varieties are singular. In favorable situations, these singularities can be resolved by a \emph{crepant resolution}, which can often be obtained through \emph{primitive convex central subdivisions}.

\begin{defi}
Let $\Delta \subset M_{\R}$ be a reflexive polytope.  
A triangulation of $\Delta$ with vertices in $M$ is called \emph{central} if it is obtained by taking the convex hulls of the origin in $M$ together with the simplices from a triangulation of the facets of $\Delta$.  
It is called \emph{convex} if it is induced by an integral convex piecewise linear function on $\Delta$.  
The triangulation is said to be \emph{primitive} if all of its simplices are primitive.  

We denote such a triangulation by $T$, and by $\partial T$ the collection of simplices of $T$ contained in the boundary $\partial \Delta$.
\end{defi}

Let $T$ be a primitive central triangulation of $\Delta$. The corresponding \emph{regular fan} $\Sigma_T$ is constructed by taking the cones from the origin of $M$ over the simplices in the subdivision of $\Delta$. This fan refines $\Xi_{\Delta} = \Sigma_{\Delta^{\circ}}$, and therefore defines a desingularization
\[
\C\Sigma_T \longrightarrow \C\Sigma_{\Delta^{\circ}}.
\]

Throughout this work, we assume that there exist primitive triangulations $T$ and $T^{\circ}$ of $\Delta$ and $\Delta^{\circ}$ respectively; convexity will not always be assumed. In this situation, generic anticanonical hypersurfaces in $\C\Sigma_T$ and $\C\Sigma_{T^{\circ}}$ are smooth Calabi--Yau varieties.  
We denote by
\[
X \subset \C\Sigma_{T^{\circ}}, \qquad X^{\circ} \subset \C\Sigma_T
\]
the smooth anticanonical hypersurfaces replacing the singular ones in $\C\Sigma_{\Delta}$ and $\C\Sigma_{\Delta^{\circ}}$.  
When the triangulations are not convex, these varieties may be non-projective.

In the projective case (that is, when both $T$ and $T^{\circ}$ are convex), Batyrev and Borisov~\cite{BB} proved that the Hodge numbers of $X$ and $X^{\circ}$ satisfy the \emph{mirror symmetry identity}
\begin{equation} \label{ms_hodge}
    h^{p,q}(X) = h^{n-p,q}(X^{\circ}).
\end{equation}

This relation is established in Theorem~4.15 of~\cite{BB} for the \emph{stringy Hodge numbers}, which depend only on the singular Calabi--Yau varieties.  
Proposition~1.1 of~\cite{BB} shows that, in the smooth projective case, these coincide with the usual Hodge numbers of the crepant resolutions determined by $T$ and $T^{\circ}$.  
Although a general proof does not appear to our knowledge in the literature, it is expected that the mirror identity of Hodge numbers continues to hold in the non-projective case.

\subsection{Tropical mirror pairs}\label{combmirror}

We first introduce some notations that will be used throughout the paper.  
Given a simplex $\sigma \in T$, we define:
\begin{itemize}
    \item $C(\sigma) \in \Sigma_T$ as the cone over $\sigma$;
    \item $\langle \sigma \rangle \subset M$ denotes the integral tangent space to $\sigma$;
    \item $\sigma^{\perp}$ as the subspace of $N$ orthogonal to $\langle \sigma \rangle$;
    \item if $\sigma \neq 0$, then 
    \[
        \sigma_{\infty} = \sigma \cap \partial \Delta.
    \]
\end{itemize}

Given a cone $\rho \in \Sigma_T$, we define:
\begin{itemize}
    \item $S(\rho) = \rho \cap \Delta$;
    \item $\hat{\sigma}$ as the simplex $S(C(\sigma))$, i.e. the convex hull of $0$ and $\sigma$.
\end{itemize}

In particular, when $0 \in \sigma$, we have $\hat{\sigma} = \sigma$, and $\sigma_{\infty}$ is the unique facet of $\sigma$ contained in $\partial \Delta$.  
If $0 \notin \sigma$, then $\sigma_{\infty} = \sigma$.

\medskip
Given two fans $\Sigma$ and $\Sigma'$ such that $\Sigma$ is a refinement of $\Sigma'$, we set:
\begin{itemize}
    \item for every cone $\rho \in \Sigma$, denote by $\min(\rho)$ the smallest cone of $\Sigma'$ containing $\rho$.
\end{itemize}

If $\Sigma$ is the normal fan of a polytope $\Delta$, we denote:
\begin{itemize}
    \item $\rho^{\vee}$ the face of $\Delta$ normal to $\rho$;
    \item for any face $F$ of $\Delta$, $F^{\vee}$ the cone of $\Sigma$ normal to $F$.
\end{itemize}

\medskip
Assume that $T$ is a primitive convex central triangulation of $\Delta$.  
If $\mu : \Delta \cap M \to \mathbb{Z}_{\ge 0}$ is a convex function ensuring the convexity of $T$, one can consider its Legendre transform (defined over $N_{\mathbb{R}}$):
\begin{equation}\label{tropicalhypersurface}
    \mu^*(x) = \max_{y \in \Delta \cap M} \big( \langle x, y \rangle - \mu(y) \big),
\end{equation}
and define the set
\[
    V_\mu := \left\{ x \in N_{\mathbb{R}} \ \middle|\ 
        \exists\, y_1 \neq y_2 \text{ such that } 
        \mu^*(x) = \langle x, y_1 \rangle - \mu(y_1)
                  = \langle x, y_2 \rangle - \mu(y_2) 
    \right\}.
\]
In other words, $V_\mu$ is the set of points where the maximum in~\eqref{tropicalhypersurface} is attained at least twice.  
It is the \emph{tropical hypersurface} associated to $\mu$: a weighted, rational, balanced polyhedral complex inducing a subdivision of $N_{\mathbb{R}}$ which is dual (as a poset) to the triangulation $T$ (see for example~\cite{BIMS}).  

Moreover, the compactification of $V_\mu$ in the tropical toric variety $\mathbb{T}\Sigma_\Delta$ induces a subdivision of $\mathbb{T}\Sigma_\Delta$ whose underlying poset is the subposet of $\Sigma_\Delta \times T$:
\begin{equation} \label{poset:delta}
    \Phi := \left\{ (\rho, \sigma) \in \Sigma_\Delta \times T \ \middle|\ \sigma \subset \rho^\vee \right\}.
\end{equation}
The order on this poset is the inverse of inclusion, i.e.
$(\rho, \sigma) \leq (\rho', \sigma')$ iff $\rho' \subset \rho$ and $\sigma' \subset \sigma$
(see~\cite{brugalle2022combinatorial}, Remark~2.3).

\begin{defi} \label{dual:trop}
Let $T$ be a primitive convex central triangulation of $\Delta$ and $T^\circ$ a primitive convex central triangulation of $\Delta^\circ$, induced respectively by convex functions $\mu$ and $\mu^\circ$.  
We denote by $X_{\mathrm{trop}}$ the compactification of $V_\mu$ inside $\mathbb{T}\Sigma_{T^\circ}$, and by $X_{\mathrm{trop}}^\circ$ the compactification of $V_{\mu^\circ}$ inside $\mathbb{T}\Sigma_T$. The pair $(X_{\trop},X_{\trop}^\circ)$ is called a tropical mirror pair.
\end{defi}

The tropical variety $X_{\mathrm{trop}}$ induces a subdivision of $\mathbb{T}\Sigma_{T^\circ}$ with underlying poset
\[
    \mathcal{P}(T^\circ, T) :=
    \left\{
        (\tau, \sigma) \in T^\circ \times T
        \ \middle|\ 
        0 \in \tau \text{ and } \sigma \subset \min(C(\tau))^\vee
    \right\}.
\]
Recall that $C(\tau)$ is the cone (in $\Sigma_{T^\circ}$) over $\tau$.  
Since $\Sigma_{T^\circ}$ is a refinement of the fan $\Xi_{\Delta^\circ}$ (the normal fan of $\Delta$), we have that $\min(C(\tau))^\vee$ is a face of $\Delta$.  
The order on $\mathcal{P}(T^\circ, T)$ is again the inverse of inclusion.
We will be intersted more specifically in the subposet of $\mathcal{P}(T^\circ,T)$ defined by 
$$
\mathcal{P}^1(T^\circ,T):=\left\lbrace (\tau,\sigma)\in \mathcal{P}(T^\circ,T) \vert \dim(\sigma)\geq 1 \right\rbrace .
$$
This poset parametrizes $X_{\text{trop}}$. As it will appear later, this poset will be the support of the cosheaves we will consider.
We refer the reader to our paper~\cite{MR24} for illustrative examples. There, we address the more general setting of subdivisions~$T$ and~$T^\circ$ that are not necessarily convex, a case we omit here for reasons of clarity and conciseness. We now make a remark that will be useful later.
\begin{rem}\label{rem:divinfacet}
Let $\tau$ be an edge of $T^\circ$ containing the origin, whose other endpoint lies in the interior of a facet of $\Delta^\circ$.
Then there is no element of $\mathcal{P}^1(T^\circ, T)$ whose first coordinate is $\tau$, since $\min C(\tau)^{\vee}$ is zero dimensional. Geometrically, this means that the divisor of $\T\Sigma_{T^\circ}$ corresponding to $\tau$ does not intersect the tropical hypersurface $X_{\trop}$.
\end{rem}

\subsection{Tropical cohomology}
Tropical cohomology introduced by Itenberg, Katzarkov, Mikhalkin and Zharkov \cite{trop_hom}, in its cellular version, belongs to the realm of poset cohomology (see for example \cite{brugalle2022combinatorial} and reference therein for poset cohomology). 
We fix two primitive central triangulations $T$ and $T^\circ$ of $\Delta$ and $\Delta^\circ$, respectively, which are not necessarily convex.  
The poset $\mathcal{P}(T^\circ,T)$ is graded by
\[
    \dim(\tau,\sigma) = \operatorname{codim}(\tau) - \dim(\sigma).
\]
Note that in the geometric realization of $\mathcal{P}(T^\circ,T)$, we have
\[
    \dim(\tau,\sigma) = \dim F(\tau,\sigma).
\]
Moreover, the poset $\mathcal{P}(T^\circ,T)$ is \emph{thin} and admits a \emph{balanced signature}, obtained by choosing orientations on each cell $F(\tau,\sigma)$.

\medskip
Recall that the $p$-multitangent cosheaves over $\mathcal{F}_p(T^\circ,T)$ are defined by
$$
\mathcal{F}_p(\tau,\sigma):=\sum_{
\begin{array}{c}
\lambda\subset\sigma \\ \dim(\lambda)=1
\end{array}} \bigwedge^p (\lambda^\perp / \langle C(\tau) \rangle ).
$$
The $p$-multitangent sheaves $\mathcal{F}^p(T^\circ,T)$ are the duals
$$
\mathcal{F}^{p}(\tau,\sigma):=\mathrm{Hom}(\F_p(\tau,\sigma),\Z).
$$

Let $s=\dim \sigma$ and let $\vol_{\sigma}$ be a generator of $\wedge^s \langle \sigma \rangle$, unique up to a sign. Then we have
\begin{lem}\label{lem:tropsheaf}

\[ \mathcal{F}^{p}(\tau,\sigma) = \frac{\bigwedge^p C(\tau)^\perp}{\operatorname{vol}_{\sigma} \wedge \bigwedge^{p-\dim \sigma} C(\tau)^\perp }. \]

\end{lem}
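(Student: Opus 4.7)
The plan is to identify $\F^p(\tau,\sigma)$ as an explicit quotient of $\bigwedge^p C(\tau)^\perp$ by dualizing the definition of $\F_p(\tau,\sigma)$ summand-by-summand, and then to reduce the resulting intersection of sub-lattices to the prescribed ideal by a basis-level computation. Writing $W := \langle C(\tau)\rangle \subset N$ and $U := C(\tau)^\perp \subset M$, the natural pairing $M\otimes N\to\Z$ induces a perfect pairing $U\otimes(N/W)\to\Z$, identifying $\bigwedge^p U$ with the dual of $\bigwedge^p(N/W)$. Dualizing the inclusion $\F_p(\tau,\sigma)\hookrightarrow\bigwedge^p(N/W)$ then yields a surjection $\bigwedge^p U\twoheadrightarrow\F^p(\tau,\sigma)$ whose kernel is the annihilator of $\F_p(\tau,\sigma)$. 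The defining condition $\sigma\subset\min(C(\tau))^\vee$ of $\mathcal{P}(T^\circ,T)$ ensures $\langle\sigma\rangle\subset U$, hence $\langle\lambda\rangle\subset U$ for every edge $\lambda$ of $\sigma$.

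Next I would compute this annihilator one summand at a time. The annihilator of a sum is the intersection of annihilators, and the inclusion $\lambda^\perp/W\hookrightarrow N/W$ is dual to the quotient $U\twoheadrightarrow U/\langle\lambda\rangle$, so the annihilator of a single summand $\bigwedge^p(\lambda^\perp/W)$ inside $\bigwedge^p U$ is $\langle\lambda\rangle\wedge\bigwedge^{p-1}U$. Thus the proof reduces to the lattice-theoretic identity
\[
\bigcap_{\substack{\lambda\subset\sigma\\ \dim\lambda=1}} \langle\lambda\rangle\wedge\bigwedge^{p-1}U \;=\; \vol_\sigma\wedge\bigwedge^{p-\dim\sigma}U.
\]

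The last step is the basis verification of this identity. I would fix a vertex $v_0$ of $\sigma$ and let $e_1,\ldots,e_s$ be the primitive tangent vectors of the $s$ edges of $\sigma$ emanating from $v_0$, so that primitivity of $\sigma$ makes $\{e_1,\ldots,e_s\}$ a $\Z$-basis of $\langle\sigma\rangle\cap M$ and forces $\vol_\sigma=\pm e_1\wedge\cdots\wedge e_s$; since $\langle\sigma\rangle$ is saturated in $U$, this basis extends to a $\Z$-basis of $U$. In the resulting monomial basis of $\bigwedge^p U$, divisibility by each $e_i$ for $i=1,\ldots,s$ is equivalent to every monomial appearing containing $e_1\wedge\cdots\wedge e_s$, which immediately gives $\subseteq$ using only the $s$ edges through $v_0$; the reverse inclusion is automatic. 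It remains to check that the other edges add no constraint: for an edge of $\sigma$ with tangent $e_i - e_j$, the rewriting $e_1\wedge\cdots\wedge e_s = (e_i - e_j)\wedge e_j\wedge\eta$ shows that every element of $\vol_\sigma\wedge\bigwedge^{p-s}U$ is already divisible by $e_i - e_j$.

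The chief obstacle is integral rather than conceptual: the quotient presentation demands that $\F_p(\tau,\sigma)$ be saturated in $\bigwedge^p(N/W)$ and that $\{e_1,\ldots,e_s\}$ genuinely be a $\Z$-basis (not merely a $\Q$-basis) of $\langle\sigma\rangle\cap M$. Both points rest on the primitivity of $\sigma$, which also ensures the saturation of $\langle\sigma\rangle$ inside $U$ used for the basis extension; once integrality is secured, all remaining manipulations amount to monomial bookkeeping in the exterior algebra.
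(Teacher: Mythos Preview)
Your proposal is correct and follows essentially the same route as the paper's own proof: both dualize the defining inclusion $\F_p(\tau,\sigma)\hookrightarrow\bigwedge^p(N/\langle C(\tau)\rangle)$, rewrite the annihilator of the sum as an intersection of the ideals $\langle\lambda\rangle\wedge\bigwedge^{p-1}C(\tau)^\perp$, and then identify that intersection with $\vol_\sigma\wedge\bigwedge^{p-\dim\sigma}C(\tau)^\perp$. The only difference is that you actually carry out the last identification via a basis argument (and flag the integrality issues resting on primitivity of $\sigma$), whereas the paper simply asserts it.
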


\begin{proof}
For every $\tau \in T^\circ$ we will denote 
 $$(N)_{\tau} = N  / \langle C(\tau) \rangle,$$
 where $\langle C(\tau) \rangle$ is the submodule of $N$ generated by $C(\tau) \cap N$.
    We have $\Hom(\bigwedge^p N_\tau, \Z)= \bigwedge^p C(\tau)^\perp$. Therefore, 
    \begin{equation*}
        \begin{split}
            \mathcal{F}^{p}(\tau,\sigma) & = \frac{ \bigwedge^p C(\tau)^\perp}{[ \sum_{
\lambda\subset\sigma, \dim(\lambda)=1 }
 \bigwedge^p (\lambda^\perp / \langle C(\tau) \rangle ) ]^\perp} = \\
 \ &=\frac{ \bigwedge^p C(\tau)^\perp}{\bigcap_{
\lambda\subset\sigma, \dim(\lambda)=1 }
 [\bigwedge^p (\lambda^\perp / \langle C(\tau) \rangle )]^\perp}.
        \end{split}
    \end{equation*}
For any submodule $W \subset N_\tau$ we have that 
\[ \left[\bigwedge^p W \right]^\perp = W^\perp \wedge \bigwedge^{p-1} C(\tau)^\perp \]
In particular 
 \[ \mathcal{F}^{p}(\tau,\sigma) = \frac{ \bigwedge^p C(\tau)^\perp}{\bigcap_{
\lambda\subset\sigma, \dim(\lambda)=1 }
 (\langle \lambda \rangle \wedge \bigwedge^{p-1} C(\tau)^\perp)}.\]
 We also have 
 \[\bigcap_{
\lambda\subset\sigma, \dim(\lambda)=1 }
 (\langle \lambda \rangle \wedge \bigwedge^{p-1} C(\tau)^\perp) = \vol_{\sigma} \wedge \bigwedge^{p-\dim \sigma} C(\tau)^\perp.\]
 This proves the lemma.
 
\end{proof}

The sheaf maps of $\mathcal{F}^p$ are induced by the natural injections
\begin{equation}\label{cosheafmaps}
   \langle C(\tau') \rangle
    \longrightarrow
    \langle C(\tau) \rangle,
\end{equation}
whenever $\tau \subset \tau'$ (recall that, by definition of $\mathcal{P}(T^\circ,T)$, both $\tau$ and $\tau'$ contain $0$).

\medskip
The support of the sheaves $\mathcal{F}^p$ is the subposet $\mathcal{P}^1(T^\circ,T)$.  
For brevity, if $\mathcal{G}$ is any sheaf on $\mathcal{P}^1(T^\circ,T)$, we will write
\[
    H^\bullet(X_{\trop}; \mathcal{G})
    := H^\bullet(\mathcal{P}^1(T^\circ,T); \mathcal{G})
\]
for the corresponding cohomology groups.

\section{Picard group of the mirror}
Let $(X_{\trop},X^\circ_{\trop})$ be a tropical mirror pair arising from central primitive triangulations $T$ and $T^\circ$ of $\Delta$ and $\Delta^\circ$ of dimensions $n+1$. The main result of this section is the following theorem:
\begin{thm}\label{thm:surj}
    If the polytope $\Delta$ is non-singular and if $n\geq 3$, the map $H^{1}(\T\Sigma_{T},\mathcal{F}^{1}\otimes \FF_2)\rightarrow H^{1}(X_{\trop}^{\circ},\mathcal{F}^{1} \otimes \FF_2)$ induced by inclusion is surjective.
\end{thm}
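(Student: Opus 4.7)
The plan is to deduce the surjectivity from the tropical Lefschetz hyperplane theorem of \cite{ARS} applied to the inclusion $X^{\circ}_{\trop}\hookrightarrow\T\Sigma_{T}$.

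First I would unpack what the hypothesis that $\Delta$ is non-singular means combinatorially. By definition it says that $\Xi_{\Delta}=\Sigma_{\Delta^{\circ}}$ is a smooth fan, or equivalently that every facet $F$ of $\Delta$ together with the origin forms a primitive $(n+1)$-simplex. Such an $F$ is itself a basis $n$-simplex, and a basis simplex has no lattice points other than its vertices: in the dual basis given by the vertices of $F$, its lattice points are the integer solutions of $t_{1}+\cdots+t_{n+1}=1$ with $t_{i}\geq 0$, which are exactly the vertices. Hence $\partial\Delta\cap M$ equals the vertex set of $\Delta$, and the only primitive central triangulation $T$ of $\Delta$ is the trivial one obtained by coning each facet (already primitive) with $0$. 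In particular $\Sigma_{T}=\Sigma_{\Delta^{\circ}}$, so $\T\Sigma_{T}$ coincides with the smooth tropical toric variety $\T\Sigma_{\Delta^{\circ}}$, and the Newton polytope $\Delta^{\circ}$ of $X^{\circ}_{\trop}$ defines an ample class on it.

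Since $T^{\circ}$ is a primitive central triangulation of $\Delta^{\circ}$ and the ambient fan is smooth, the compactification $X^{\circ}_{\trop}$ is a smooth tropical Calabi--Yau hypersurface of dimension $n$ in the smooth $(n+1)$-dimensional tropical toric variety $\T\Sigma_{T}$, defined by an ample class. This places us squarely in the setting of the tropical Lefschetz hyperplane theorem of \cite{ARS}, which asserts that for any commutative ring $A$ the restriction map
\[
H^{q}(\T\Sigma_{T},\mathcal{F}^{p}\otimes A)\longrightarrow H^{q}(X^{\circ}_{\trop},\mathcal{F}^{p}\otimes A)
\]
is an isomorphism for $p+q<n$ and injective for $p+q=n$. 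Taking $p=q=1$ with $n\geq 3$ and $A=\FF_{2}$ yields the required surjectivity (in fact an isomorphism).

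The step requiring the most care is the verification that our embedding satisfies the precise hypotheses (smoothness and combinatorial ampleness) of the Lefschetz theorem in \cite{ARS}; the key observation $\Sigma_{T}=\Sigma_{\Delta^{\circ}}$ makes ampleness automatic, and the smoothness of $X^{\circ}_{\trop}$ is ensured by primitivity of $T^{\circ}$ together with smoothness of the ambient fan. The passage from integer to $\FF_{2}$ coefficients is then a routine application of the universal coefficients theorem, using the torsion-freeness of integral tropical cohomology in the relevant bidegrees, as already invoked in the proof of Corollary~\ref{cor:firstBetti}.
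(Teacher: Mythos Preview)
Your argument rests on a misreading of the hypothesis. In this paper (see \S\ref{subsection:mirrordefi}), ``$\Delta$ is non-singular'' means that the \emph{normal} fan $\Sigma_\Delta=\Xi_{\Delta^\circ}$ is regular; equivalently, the facets of $\Delta^\circ$ (not of $\Delta$) are primitive simplices. Your claim that $\Xi_\Delta$ is smooth, and hence that $\partial\Delta\cap M$ consists only of the vertices of $\Delta$ so that $\Sigma_T=\Sigma_{\Delta^\circ}$, is precisely what holds under the \emph{other} hypothesis ``$\Delta^\circ$ is non-singular''. Indeed, the paper isolates exactly this case in the Remark following Theorem~\ref{thm:surj}: when $\Delta^\circ$ is non-singular one has $\T\Sigma_T=\T\Sigma_{\Delta^\circ}$, $X^\circ_{\trop}$ is combinatorially ample, and the Lefschetz theorem of \cite{ARS} gives the stronger conclusion that the restriction map is an isomorphism. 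So what you have written is a proof of that Remark, not of the Theorem.

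Under the actual hypothesis of Theorem~\ref{thm:surj}, the triangulation $T$ of $\Delta$ is typically nontrivial, $\Sigma_T$ strictly refines $\Sigma_{\Delta^\circ}$, and $X^\circ_{\trop}\subset\T\Sigma_T$ need not be combinatorially ample; the Lefschetz theorem of \cite{ARS} does not apply directly. The paper instead factors the restriction through the two short exact sequences of sheaves \eqref{eq:Qp} and \eqref{eq:Np} and proves the needed vanishing $H^2(\mathcal Q^1)=0$ and $H^2(\mathcal N^1)=0$ (Propositions~\ref{prop:vanishQ} and~\ref{prop:vanishN}). The non-singularity of $\Delta$ enters there by forcing $\Delta^\circ\cap N$ to be exactly the vertex set of $\Delta^\circ$, which controls the support of $\mathcal Q^p$ and $\mathcal N^1$. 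That is the missing idea your approach would need to supply.
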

\begin{rem}
In the case where the polytope $\Delta^\circ$ is non-singular, then $\T\Sigma_{T}=\T\Sigma_{\Delta^\circ}$ and the hypersurface $X^\circ_{\trop}$ in $\T\Sigma_{T}$ is combinatorially ample (see Definition 2.5 in \cite{ARS}). Then results from \cite{ARS} apply and the map in Theorem \ref{thm:surj} is an isomorphism.  
\end{rem} 

\begin{rem}
In \cite{TMS}, Gross studies a mirror \(X^\circ\) of the quintic threefold \(X \subset \mathbb{P}^4\), obtained by choosing a particular triangulation of the associated polytope. He examines a collection of divisors on \(X^\circ\) and shows in Lemma~4.3 that these divisors generate \(H^2(X^\circ,\mathbb{Z})\). Notably, the divisors in question arise precisely from the ambient toric variety. Consequently, in this particular case, Theorem~\ref{thm:surj} may be viewed as a tropical analogue of Lemma~4.3 of \cite{TMS}

\end{rem}

The strategy to prove Theorem \ref{thm:surj} is the same as in \cite{ARS}. For simplicity we write $\mathcal F^p$ instead of $\mathcal F^p \otimes \FF^2$. We introduce the following short exact sequences of cellular sheaves:
    \begin{equation}\label{eq:Qp}
          0 \rightarrow \mathcal{Q}^p\rightarrow \mathcal{F}^p_{\Delta^{\circ}}\rightarrow\mathcal{F}^p_{\Delta^{\circ}}\vert_{X^{\circ}_{\trop}}\rightarrow 0,  
    \end{equation}

and
    \begin{equation}\label{eq:Np}
    0 \rightarrow \mathcal{N}^p\rightarrow \mathcal{F}^p_{\Delta^{\circ}}\vert_{X^{\circ}_{\trop}}\rightarrow \mathcal{F}^p_{X^{\circ}_{\trop}}\rightarrow 0.
    \end{equation}
    Here the sheaf $\mathcal{F}^p_{\Delta^{\circ}}$ is defined on the subdivision of $\T\Sigma_T$ induced by $X^{\circ}_{\trop}$ with underlying poset the subposet of $T\times T^\circ$ defined by 
    $$
    \mathcal{P}(T,T^\circ):=\left\lbrace(\tau,\sigma)\in T\times T^\circ \mid 0\in\tau \mbox{ and } \sigma\in\min(C(\tau))^\vee \right\rbrace.
    $$
    Its restriction to $X^{\circ}_{\trop}$ is the restriction to the subposet
    $$
    \mathcal{P}^1(T,T^\circ)=\left\lbrace(\tau,\sigma)\in \mathcal{P}(T,T^\circ) \mid \dim(\sigma)\geq 1 \right\rbrace.
    $$
    The sheaf $\mathcal{F}^p_{X^{\circ}_{\trop}}$ is also defined on the poset $\mathcal{P}^1(T,T^\circ)$. The sheaf $\mathcal{F}^p_{\Delta^{\circ}}$ is simply defined by 
    $$
    \mathcal{F}^{p}_{\Delta^{\circ}}(\tau,\sigma):=\bigwedge^p C(\tau)^\perp.
    $$
    Moreover, recall that Lemma \ref{lem:tropsheaf} states that
    \[ \mathcal{F}^{p}_{X^{\circ}_{\trop}}(\tau,\sigma) = \frac{\bigwedge^p C(\tau)^\perp}{\operatorname{vol}_{\sigma} \wedge \bigwedge^{p-\dim \sigma} C(\tau)^\perp }. \]
    \begin{prop}\label{prop:vanishQ}
        One has $H^2(\mathcal{Q}^1)=0$.
    \end{prop}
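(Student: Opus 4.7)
The plan is to decompose $\mathcal{Q}^{1}$ according to the vertices of $T^{\circ}$ and to compute each summand by a local comparison with the constant sheaf.

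Since $\mathcal{Q}^{1}$ is supported on cells $(\tau,\sigma)\in\mathcal{P}(T,T^{\circ})$ with $\dim\sigma=0$, and two such cells are poset-comparable only if they share the vertex $\sigma$, the cochain complex splits as $C^{\bullet}(\mathcal{Q}^{1})=\bigoplus_{v}C^{\bullet}(\mathcal{Q}^{1}_{v})$, where $v$ runs over the vertices of $T^{\circ}$ and $\mathcal{Q}^{1}_{v}$ denotes the restriction to cells $(\tau,v)$. Hence it is enough to prove $H^{2}(\mathcal{Q}^{1}_{v})=0$ for each $v$. The summand at $v=0$ is supported on the single cell $(\{0\},0)$ of dimension $n+1$ and contributes nothing in degree $2$. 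For a boundary vertex $v$, the non-singularity of $\Delta$ (which makes $\Sigma_{\Delta}$ smooth and forces every face of $\Delta^{\circ}$ to be a primitive lattice simplex without interior lattice points) implies $v$ is a vertex of $\Delta^{\circ}$. Writing $G_{v}$ for the dual facet of $\Delta$, the cells of $\mathcal{Q}^{1}_{v}$ are indexed by simplices $\sigma\in\partial T$ with $\sigma\subset G_{v}$ (giving $\tau=\hat\sigma$) together with $\sigma=\emptyset$ (giving $\tau=\{0\}$); the stalks are $\mathrm{span}(\sigma)^{\perp}\subset N_{\FF_{2}}$ with inclusions as sheaf maps, and the underlying complex is the cone on $G_{v}$, hence contractible.

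The computation will proceed via the short exact sequence of cellular sheaves
$$
0\to\mathcal{Q}^{1}_{v}\to\underline{N_{\FF_{2}}}\to\mathcal{S}_{v}\to 0,
$$
where $\underline{N_{\FF_{2}}}$ is the constant sheaf on the support and $\mathcal{S}_{v}(\hat\sigma,v)=\mathrm{span}(\sigma)^{\ast}$. Contractibility of the support gives $H^{i}(\underline{N_{\FF_{2}}})=0$ for $i\geq 1$, so the long exact sequence identifies $H^{2}(\mathcal{Q}^{1}_{v})\cong H^{1}(\mathcal{S}_{v})$. The whole problem thus reduces to proving $H^{1}(\mathcal{S}_{v})=0$ on the topological disk $G_{v}$.

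The main obstacle is this final vanishing. The sheaf $\mathcal{S}_{v}$ has stalks $\mathrm{span}(\sigma)^{\ast}$ which record the mod-$2$ linear dependencies among the vertices of $\sigma$. I plan to iterate the same trick: the natural surjection $\underline{\mathrm{span}(G_{v})^{\ast}}\twoheadrightarrow\mathcal{S}_{v}$ produces a new short exact sequence whose kernel is a \emph{perpendicular}-type sheaf now valued in $\mathrm{span}(G_{v})^{\ast}$ in place of $N_{\FF_{2}}$, and iterated use of the long exact sequence should reduce the analysis to constant sheaves on subcomplexes. Stratifying the vertices of $T$ lying in $G_{v}$ by their parity class in $M_{\FF_{2}}$ and using the primitivity of $T$ keeps each stratum combinatorially simple, so that the higher cohomology of the iterated kernels on the contractible base $G_{v}$ vanishes. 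The delicate point is handling multiple parity classes on $G_{v}$ --- which may arise since $\Delta^{\circ}$ is not assumed non-singular --- where one must use the non-singularity of $\Delta$ to control how these classes interact with $T$.
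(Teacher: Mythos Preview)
Your decomposition of $\mathcal{Q}^1$ over the vertices of $\Delta^\circ$ agrees with the paper's first step. From there, however, the two arguments diverge completely, and your proposal has a genuine gap.

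First, a correctable issue: your justification that $H^{i}(\underline{N_{\FF_2}})=0$ for $i\geq 1$ because ``the underlying complex is the cone on $G_v$, hence contractible'' is not the right reasoning. The grading on the cells $(\hat\sigma,v)$ is $n+1-\dim\hat\sigma$, the \emph{opposite} of the simplicial dimension on $\mathrm{Cone}(G_v)$, so the cochain complex of the constant sheaf is not computing $H^{\ast}(\mathrm{Cone}(G_v))$. Unwinding the differentials, it is the \emph{augmented simplicial chain complex} of $G_v$ shifted by $n$, so $H^{q}(\underline{N_{\FF_2}})=\tilde H_{n-q}(G_v;N_{\FF_2})$, which indeed vanishes in all degrees since $G_v$ is a ball. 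Equivalently, this is $H^{\ast}_c(\T\Sigma_{T_v};N_{\FF_2})$, as the paper identifies. Your conclusion survives, but for a different reason.

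The real gap is the computation of $H^{1}(\mathcal{S}_v)$. Your proposed iteration does not terminate: on the support of $\mathcal{S}_v$ (the cells with $\sigma\neq\emptyset$), the kernel of $\underline{N_{\FF_2}}\twoheadrightarrow\mathcal{S}_v$ is exactly $\mathcal{Q}^1_v$ restricted to that support (since $\mathrm{span}(G_v)^{\ast}=N_{\FF_2}$), and removing the single top cell $(\{0\},v)$ does not change $H^2$ when $n\geq 2$. So the second short exact sequence returns you to $H^{2}(\mathcal{Q}^1_v)$ and the argument loops. The stratification by parity classes does not break this circularity. A way to rescue your line of attack does exist: by unimodularity of $T$ the vertices of each $\sigma$ form a basis of $\langle\sigma\rangle_{\FF_2}$, so $\mathcal{S}_v\cong\bigoplus_{w\in G_v\cap M}\mathcal{S}_v^{(w)}$ with $\mathcal{S}_v^{(w)}$ the constant sheaf $\FF_2$ on the cells with $w\in\sigma$; the cochain complex of each summand is the augmented chain complex of $\mathrm{link}_w(G_v)$ shifted, giving $H^{q}(\mathcal{S}_v^{(w)})=\tilde H_{n-1-q}(\mathrm{link}_w)$, which vanishes for $q=1$ whether $w$ is interior (link $\simeq S^{n-1}$) or boundary (link $\simeq D^{n-1}$). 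But this is not what you wrote.

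The paper bypasses all of this: it recognises $H^{q}(\mathcal{Q}^1_v)$ as $H^{q}_c(\T\Sigma_{T_v},\mathcal{F}^1)$, embeds it as a direct summand of $H^{q+1}_c(\C\Sigma_{T_v};\FF_2)$ via the Bia\l ynicki-Birula/weight decomposition, applies Poincar\'e duality to pass to $H^{2n-1}(\C\Sigma_{T_v};\FF_2)$, and then invokes the vanishing of odd-degree integral cohomology for smooth toric varieties with convex full-dimensional fan support (Masuda). This is shorter and uses established machinery rather than a bespoke combinatorial argument.
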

\begin{proof}

Since the polytope $\Delta$ is non-singular, the set of points in $\Delta^\circ\cap N$ coincide with the set of vertices of $\Delta^\circ$. Moreover, for any vertex $v$ of $\Delta^\circ$, the set $\min(C(v))$ consists only of the cone $C(v)$ over $v$ and its dual face $(\min(C(v)))^\vee$ is the facet of $\Delta$ normal to $C(v)$. So the support of $Q^p$ is 
$$
\bigsqcup_{v\in \mathrm{Vert}(\Delta^\circ)} \left\lbrace (\tau,v) \mid \tau_\infty \in C(v)^\vee\right\rbrace \subset \mathcal{P}(T,T^\circ)\setminus \mathcal{P}^1(T,T^\circ).
$$
Denote by $\Sigma_{T_v}$ the fan in $M_\R$ consisting of all cones $C(\tau)$ such that $(\tau,v)\in\mathcal{P}(T,T^\circ)$. The fan $\Sigma_{T_v}$ is obtained by taking cones over the triangulation of the facet $C(v)^\vee$ of $\Delta$. The support of $\mathcal{Q}^p$ is then 
$$
\bigsqcup_{v\in \mathrm{Vert}(\Delta^\circ)} | \Sigma_{T_v}| \times v,
$$
and moreover
$$
\mathcal{Q}^p\vert_{\Sigma_{T_v}\times v}=\mathcal{F}^p\vert_{\Sigma_{T_v}}.
$$
Then
$$
H^q(Q^p)=\bigoplus_{v\in \mathrm{Vert}(\Delta^\circ)} H^q_c(\T\Sigma_{T_v},\mathcal{F}^p).
$$
As explained for example in \cite{jordan1998}[Section 2.5], the compactly supported cohomology groups $H^i_c(\C \Sigma_{T_v},\FF_2)$ splits as a (non-canonical) direct sum 
$$
H^i_c(\C \Sigma_{T_v},\FF_2)=\bigoplus_{p+q=i}H_c^q(\T\Sigma_{T_v},\mathcal{F}^p).
$$
 Since the fan $\Sigma_{T_v}$ is non-singular, by Poincaré duality and universal coefficient theorem, one has $H^{i}_c(\C \Sigma_{T_v},\FF_2)\cong H^{2n+2-i}(\C \Sigma_{T_v},\FF_2)$.
Since $\Sigma_{T_v}$ is full dimensional with convex support, the cohomology 
$H^{k}(\C \Sigma_{T_v}, \C)$ vanishes for odd $k$ (see, for example, 
\cite[Corollary~4.4]{oda1993}). In fact, this vanishing holds also with integral 
coefficients (by \cite[Lemma~4.1 and Theorem~4.3]{Masuda2006}). The three 
hypotheses of \cite[Theorem~4.3]{Masuda2006} are satisfied: the variety 
$\C \Sigma_{T_v}$ is locally standard because it is smooth; its orbit space is a 
(non-compact) polytope homeomorphic to $\T\Sigma_{T_v}$, so every face is acyclic; 
and the third hypothesis is also fulfilled. In particular, we obtain 
$H^{2}(Q^{1}) = 0$.

\end{proof}
\begin{prop}\label{prop:vanishN}
    One has $H^2(\mathcal{N}^1)=0$.
\end{prop}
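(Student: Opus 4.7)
The plan is to decompose $\mathcal{N}^1$ as a direct sum indexed by the edges of $T^\circ$ and to identify the cohomology of each summand with a relative simplicial homology that vanishes.

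By Lemma~\ref{lem:tropsheaf}, the stalk $\mathcal{N}^1(\tau,\sigma)=\operatorname{vol}_\sigma\wedge\bigwedge^{1-\dim\sigma}C(\tau)^\perp\otimes\FF_2$ is nonzero only when $\dim\sigma=1$, in which case it equals $\langle\sigma\rangle\otimes\FF_2\cong\FF_2$. Since $\langle\sigma\rangle\subset C(\tau)^\perp$ whenever $\sigma\subset\min(C(\tau))^\vee$, the restriction map inherited from $\mathcal{F}^1_{\Delta^\circ}$ acts as the identity on these stalks; moreover, no nonzero sheaf map can connect pairs with different edges $\sigma\neq\sigma'$, because a codimension-one face of an edge is a vertex and vertices lie outside $\mathcal{P}^1(T,T^\circ)$. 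Therefore
\[
\mathcal{N}^1=\bigoplus_{\sigma\text{ edge of }T^\circ}\mathcal{N}^1_\sigma,
\]
where $\mathcal{N}^1_\sigma$ is the constant sheaf $\FF_2$ on the subposet
$A_\sigma=\{\tau\in T:0\in\tau,\ \sigma\subset\min(C(\tau))^\vee\}$.

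The next step is a geometric identification of $A_\sigma$. If $\sigma$ contains $0$, the only face of $\Delta^\circ$ containing $\sigma$ is $\Delta^\circ$ itself, forcing $C(\tau)=\{0\}$ and hence $A_\sigma=\{\{0\}\}$, which lies in grading $n$. Otherwise $\sigma\subset\partial\Delta^\circ$; letting $F$ be the smallest face of $\Delta^\circ$ containing $\sigma$ and $F^*\subset\Delta$ its dual face, the condition $\sigma\subset\min(C(\tau))^\vee$ translates into $\tau_\infty\subset F^*$, so $A_\sigma$ consists of $\{0\}$ together with the cones from $0$ over the simplices of the induced triangulation of $F^*$. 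Setting $K_\sigma=\operatorname{conv}(\{0\}\cup F^*)$, triangulated by $T$, and $L_\sigma=F^*$, the simplices of $K_\sigma$ outside $L_\sigma$ are exactly the elements of $A_\sigma$. Under the grading $\dim(\tau,\sigma)=n-\dim\tau$, the cellular cochain complex of $\mathcal{N}^1_\sigma$ is identified with $C_{n-\bullet}(K_\sigma,L_\sigma;\FF_2)$: the coboundary matches the relative boundary, since removing the vertex $0$ from a simplex in $A_\sigma$ produces a simplex of $L_\sigma$ that is killed in the quotient. Hence $H^q(\mathcal{N}^1_\sigma)\cong H_{n-q}(K_\sigma,L_\sigma;\FF_2)$.

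To conclude, $K_\sigma$ and $L_\sigma$ are both convex polytopes, hence contractible, so the long exact sequence of the pair yields $H_p(K_\sigma,L_\sigma;\FF_2)=0$ for every $p$, and thus $H^q(\mathcal{N}^1_\sigma)=0$ in the boundary case. In the interior case ($\sigma\ni 0$), $\mathcal{N}^1_\sigma$ is concentrated in grading $n\geq 3$, hence contributes nothing in degree $2$. Summing yields $H^2(\mathcal{N}^1)=0$. The main technical point to check carefully is the bookkeeping of the reverse-inclusion ordering and the grading shift, which together identify the cellular coboundary of $\mathcal{N}^1_\sigma$ with the relative simplicial boundary of the pair $(K_\sigma,L_\sigma)$.
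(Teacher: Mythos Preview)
Your argument is correct and follows the same overall strategy as the paper: decompose $\mathcal N^1$ as a direct sum over the edges of $T^\circ$, and show that each summand has vanishing $H^2$ by a convexity/contractibility argument. The difference lies only in how the cohomology of each summand is packaged. The paper identifies the piece at an edge $e$ with the compactly supported cohomology $H^q_c(\T\Sigma_{T_e},\FF_2)$ of a tropical toric variety whose fan has convex, hence contractible, support, and then invokes the vanishing of $H^q_c$ below the top dimension. You instead identify the same complex with the relative simplicial chains $C_{n-\bullet}(K_\sigma,L_\sigma;\FF_2)$ of the pair $\bigl(\conv(\{0\}\cup F^*),\,F^*\bigr)$ and use the long exact sequence of a pair of contractible spaces to get $H_*(K_\sigma,L_\sigma)=0$ in all degrees.

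Your route is slightly more elementary and has the pleasant feature that it does not actually use the non-singularity of $\Delta$: you only need that $F^*$ is a face of $\Delta$ (hence convex) and that the central triangulation $T$ restricts to a triangulation of $\conv(\{0\}\cup F^*)$, both of which hold in general. The paper, by contrast, uses the hypothesis that $\Delta$ is non-singular to ensure $\Sigma_{T^\circ}=\Sigma_{\Delta}$, so that $C(e)$ is already a cone of $\Sigma_{\Delta}$ and $C(e)^\vee$ is a face of $\Delta$; your formulation via the smallest face $F$ of $\Delta^\circ$ containing $\sigma$ sidesteps this. The price you pay is the bookkeeping you flag at the end: one has to check that under the reverse-inclusion order and the grading $n-\dim\tau$, the cellular coboundary on $A_\sigma$ really becomes the relative simplicial boundary on $(K_\sigma,L_\sigma)$; this is routine but worth spelling out, since it is where the ``facet $\tau_\infty$ lands in $L_\sigma$'' observation is used.
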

\begin{proof}
It follows from Lemma \ref{lem:tropsheaf} that 
$$
\mathcal{N}^p(\tau,\sigma)=\vol_{\sigma}\wedge\bigwedge^{p-\dim\sigma}C(\tau)^\perp.
$$
In particular, for $p=1$, one obtains that $\mathcal{N}^1(\tau,\sigma)=\langle \sigma \rangle$ if $\dim(\sigma)=1$ and $\mathcal{N}^1(\tau,\sigma)=0$ otherwise. Note that since the polytope $\Delta$ is non-singular, $\min C(e)=C(e)$ for every $e \in T^{\circ}$ and $C(e)^{\vee}$ is an $n$-dimensional face of $\Delta$. 
Then the support of $\mathcal{N}^1$ is 
$$
\bigsqcup_{e\in \mathrm{Edge}(T^\circ)} \left\lbrace (\tau,e) \mid \tau_\infty \in C(e)^\vee\right\rbrace \subset \mathcal{P}^1(T,T^\circ).
$$
Denote by $\Sigma_{T_e}$ the fan in $M_\R$ consisting of all cones $C(\tau)$ such that $(\tau,e)\in\mathcal{P}^1(T,T^\circ)$. Notice that the support of $\Sigma_{T_e}$ is convex and $n$-dimensional, in particular it is contractible. The support of $\mathcal{N}^1$ is 
$$
\bigsqcup_{e\in \mathrm{Edge}(T^\circ)} | \Sigma_{T_e} | \times e,
$$
and moreover
$$
\mathcal{N}^1\vert_{\Sigma_{T_e}\times e}=\langle e \rangle \cong \FF_2
$$
is a constant sheaf. Then 
$$
H^q(\mathcal{N}^1) \cong \bigoplus_{e\in \mathrm{Edge}(T^\circ)} H^q_c(\T\Sigma_{T_e},\FF_2).
$$
But $H^q_c(\T\Sigma_{T_e},\FF_2)=0$ if $q<\dim(n)$. So in particular if $n\geq 3$ then $H^2(\mathcal{N}^1)=0$.
\end{proof}
\begin{proof}[Proof of Theorem \ref{thm:surj}]
It follows immediately by considering the long exact sequences in cohomology associated to the short exact sequences \ref{eq:Qp} and \ref{eq:Np} and applying Proposition \ref{prop:vanishQ} and \ref{prop:vanishN}.
\end{proof}

\section{First Betti number from the dual of the cube
}\label{sec:cube}

\subsection{Real Calabi-Yau hypersufaces in the dual of the cube} Let $\square=\left[-1,1\right]^{n+1}$, and $\square^\circ$ its dual. Let $(X_{\trop},X^\circ_{\trop})$ be a pair of mirror tropical hypersurfaces arising from central primitive triangulations $T$ and $T^\circ$ of $\square$ and $\square^\circ$. We consider a patchworking on $(\square^{\circ}, T^{\circ}$) and we want to compute the first Betti number of the corresponding real variety $\R X^{\circ}$.

As a preliminary step, we establish a straightforward lemma about the square map on the toric variety $\T\Sigma_{\square}$:
\begin{lem} \label{lemprodcube}
   The map 
$$   \begin{array}{cc}
         Sq : & H^1(\T\Sigma_{\square},\mathcal{F}^1\otimes\FF_2) \rightarrow H^2(\T\Sigma_{\square},\mathcal{F}^2\otimes\FF_2) \\
         & \beta \mapsto \beta \cup \beta
    \end{array}.
    $$
    is identically zero. Moreover, for any $D\in H^1(\T\Sigma_{\square},\mathcal{F}^1\otimes\FF_2)$, the map 
$$    \begin{array}{ccc}
      \cup_D:   H^1(\T\Sigma_{\square},\mathcal{F}^1\otimes\FF_2) & \rightarrow & H^2(\T\Sigma_{\square},\mathcal{F}^2\otimes\FF_2)   \\
       \beta  & \mapsto & \beta\cup D
    \end{array}
    $$
    has kernel generated by $D$ and rank $n$. 
\end{lem}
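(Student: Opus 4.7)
The plan is to identify $\T\Sigma_{\square}$ with the tropicalisation of the toric variety $(\PP^1)^{n+1}$, since the normal fan of the cube $\square=[-1,1]^{n+1}$ is the product of the fans of $\PP^1$. For a smooth projective toric variety the tropical cohomology with $\mathcal{F}^p\otimes\FF_2$ coefficients matches the $(p,q)$-Hodge component of ordinary mod $2$ cohomology, and the cup product on tropical cohomology agrees under this identification with the cup product on $H^*((\PP^1)^{n+1},\FF_2)$. Thus
\[
\bigoplus_{p,q} H^q(\T\Sigma_{\square},\mathcal{F}^p\otimes\FF_2)\;\cong\;\FF_2[h_1,\ldots,h_{n+1}]\big/(h_1^2,\ldots,h_{n+1}^2),
\]
where each $h_i$ lies in $H^1(\T\Sigma_{\square},\mathcal{F}^1\otimes\FF_2)$ and corresponds to the class of either of the two toric divisors in the $i$-th $\PP^1$ factor (these two classes agree mod $2$ because their sum is the principal divisor of a character). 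In particular, $H^1(\T\Sigma_{\square},\mathcal{F}^1\otimes\FF_2)\cong\FF_2^{n+1}$ with basis $h_1,\ldots,h_{n+1}$, and $H^2(\T\Sigma_{\square},\mathcal{F}^2\otimes\FF_2)\cong\FF_2^{\binom{n+1}{2}}$ with basis $\{h_ih_j\mid i<j\}$.

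For the first assertion, any $\beta\in H^1(\T\Sigma_{\square},\mathcal{F}^1\otimes\FF_2)$ may be written $\beta=\sum_i a_ih_i$ with $a_i\in\FF_2$. Then
\[
\beta^2 \;=\; \sum_i a_i^2\,h_i^2 \;+\; 2\sum_{i<j} a_ia_j\,h_ih_j \;=\; 0,
\]
because $h_i^2=0$ for each $i$ and the cross terms appear with coefficient $2\equiv 0\pmod 2$.

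For the second assertion, fix $D=\sum_i d_ih_i\neq 0$ and expand $\beta=\sum_i a_ih_i$. Using $h_i^2=0$, one obtains
\[
D\cup\beta \;=\; \sum_{i<j}(d_ia_j+d_ja_i)\,h_ih_j,
\]
so $\beta\in\ker(\cup_D)$ if and only if $d_ia_j=d_ja_i$ for all $i<j$. Pick an index $k$ with $d_k=1$; specialising $j=k$ (or $i=k$) gives $a_j=d_ja_k$ for every $j$, i.e.\ $\beta=a_k\,D$. Hence $\ker(\cup_D)=\{0,D\}$ is one-dimensional and generated by $D$, and the rank is $(n+1)-1=n$.

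The only non-routine step is the opening identification: one needs both the ring structure of $H^*(\T\Sigma_{\square},\mathcal{F}^*\otimes\FF_2)$ and its agreement with the classical cohomology ring of $(\PP^1)^{n+1}$ with $\FF_2$ coefficients. Once this is in place the two computations are elementary. Should one prefer to avoid invoking this identification, an alternative approach is to describe $H^1(\T\Sigma_{\square},\mathcal{F}^1\otimes\FF_2)$ directly as classes of toric tropical divisors (indexed mod $2$ by the $n+1$ pairs of opposite rays of the normal fan of $\square$), and to verify the cup product formula $h_i\cup h_j=h_ih_j$, $h_i\cup h_i=0$ by intersecting the corresponding toric codimension-one strata of $\T\Sigma_{\square}$; this is the step that will require the most care.
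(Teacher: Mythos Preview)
Your proposal is correct and arrives at the same computation as the paper: both reduce to the relations $h_i^2=0$ (equivalently $D_{e_i}^2=0$) together with the Frobenius identity $(\sum a_i h_i)^2=\sum a_i h_i^2$ over $\FF_2$, and then a direct inspection of the bilinear form for the kernel. The only difference is in packaging: the paper takes exactly the ``alternative approach'' you sketch at the end---it works directly with the toric divisors $D_{\pm e_i}$, uses the character $e_i^*$ to get $D_{e_i}+D_{-e_i}\sim 0$, and reads off $D_{e_i}^2=D_{e_i}D_{-e_i}=0$ from the fact that $e_i$ and $-e_i$ do not span a cone---whereas you front-load the identification with $H^*((\PP^1)^{n+1},\FF_2)$. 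Your route is cleaner once that identification is granted; the paper's is more self-contained.
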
 
\begin{proof}
The rays of the fan $\Sigma_\square$ are generated by the vectors $\pm e_i$ for $1 \leq i \leq n+1$, where $\{e_1, \ldots, e_{n+1} \}$ is the standard basis in $\Z^{n+1}$. 
Let $\beta = D_{\varepsilon e_i}$ be a divisor on $\T\Sigma_\square$ with $\varepsilon = \pm 1$. 
By considering the monomial $m = e_i^*$, we obtain the relation 
\[
D_{e_i} + D_{-e_i} = 0.
\]
It follows that 
\[
D_{\varepsilon e_i}^2 = D_{\varepsilon e_i} D_{-\varepsilon e_i} = 0,
\]
since $e_i$ and $-e_i$ do not belong to a common maximal cone of $\Sigma_\square$. 
Working over $\FF_2$, we have
\[
\left( \sum_i \varepsilon_i D_{e_i} \right)^2 
= \sum_i \varepsilon_i D_{e_i}^2,
\]
and the first assertion follows.

Moreover, since $D_{e_i} = D_{-e_i}$ over $\FF_2$, any divisor can be written as 
\[
D = \sum_{i \in I} D_{e_i}
\]
for some subset $I \subset \{1, \ldots, n+1\}$. 
Let $D = \sum_{i \in I} D_{e_i}$ and $\beta = \sum_{j \in J} D_{e_j}$. 
The condition $D \cup \beta = 0$ implies immediately that $I = J$. 
This establishes the second claim.
\end{proof}

\subsection{The zero divisor} Consider first the zero divisor in $H^1(\T\Sigma_{T^\circ},\mathcal{F}^1\otimes\FF_2)$, and denote by $\delta^1$
the differential of the first page of the associated patchworking spectral sequence. Then
\begin{prop}\label{prop:zeromap}
    For any $n \geq 3$, the map
    $$
    \delta^1:H^1(X^\circ_{\trop},\mathcal{F}^{n-1}\otimes\FF_2)\rightarrow H^2(X^\circ_{\trop},\mathcal{F}^{n-2}\otimes\FF_2)
    $$
    is zero.
\end{prop}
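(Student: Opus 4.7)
The plan is to combine Theorem \ref{thm:firstBetti} (applied to the patchworking of $X^{\circ}$ rather than of $X$) with Lemma \ref{lemprodcube}. First I would use the mirror-symmetric analog of Theorem \ref{thm:firstBetti} with $D=0$: under the isomorphism of Theorem \ref{thm:cohommirroriso}, the map $\delta^1$ in the proposition is identified with the squaring map
\[
\mathrm{Sq}\colon H^1(X_{\trop},\mathcal{F}^1\otimes\FF_2)\longrightarrow H^2(X_{\trop},\mathcal{F}^2\otimes\FF_2),\qquad \beta\mapsto \beta\cup\beta.
\]
It thus suffices to show that $\mathrm{Sq}$ vanishes identically.

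Second, I would identify the ambient of $X_{\trop}$. Since $\square$ is non-singular, its normal fan $\Sigma_{\square}$ is already smooth; equivalently, the only lattice points of $\square^{\circ}$ are $0$ and $\pm e_i$, which are the vertices of the unique primitive central triangulation $T^{\circ}$. Hence $\Sigma_{T^{\circ}}=\Sigma_{\square}$, and $X_{\trop}$ lies in the smooth toric variety $\T\Sigma_{\square}\cong \T(\PP^1)^{n+1}$. The mirror-symmetric version of the remark following Theorem \ref{thm:surj} shows that $X_{\trop}$ is combinatorially ample in $\T\Sigma_{\square}$, so the tropical Lefschetz hyperplane theorem of \cite{ARS} yields a surjection
\[
\iota^{*}\colon H^1(\T\Sigma_{\square},\mathcal{F}^1\otimes\FF_2)\twoheadrightarrow H^1(X_{\trop},\mathcal{F}^1\otimes\FF_2)
\]
for $n\ge 3$.

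To conclude, I would lift any $\beta\in H^1(X_{\trop},\mathcal{F}^1\otimes\FF_2)$ to a class $\tilde{\beta}\in H^1(\T\Sigma_{\square},\mathcal{F}^1\otimes\FF_2)$. By Lemma \ref{lemprodcube}, $\tilde{\beta}^2=0$ in $H^2(\T\Sigma_{\square},\mathcal{F}^2\otimes\FF_2)$, and naturality of the cup product under restriction gives $\beta^2=\iota^{*}(\tilde{\beta}^2)=0$. Hence $\mathrm{Sq}\equiv 0$, proving the proposition.

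The main conceptual obstacle is step one, the mirror-symmetric reformulation of Theorem \ref{thm:firstBetti}: one must check that swapping $X_{\trop}$ and $X^{\circ}_{\trop}$ in the commutative diagram is legitimate in the present setting. Once that is granted, the rest is a short chain of standard inputs, namely combinatorial ampleness, tropical Lefschetz, and the ambient square-zero property of Lemma \ref{lemprodcube}.
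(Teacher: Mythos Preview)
Your proposal is correct and follows essentially the same route as the paper: reduce $\delta^1$ on $X^{\circ}_{\trop}$ to the squaring map on $H^1(X_{\trop},\mathcal F^1\otimes\FF_2)$ via the (mirror-swapped) Theorem~\ref{thm:firstBetti} with $D=0$, then use tropical Lefschetz for $X_{\trop}\subset\T\Sigma_{\square}$ together with Lemma~\ref{lemprodcube}. Two minor remarks: the paper invokes Lefschetz to get an \emph{isomorphism} on $H^1(\cdot,\mathcal F^1)$ (not merely a surjection), and it likewise applies Theorem~\ref{thm:firstBetti} with the roles of $X_{\trop}$ and $X^{\circ}_{\trop}$ swapped without further comment, so your caution about that step is not something the paper addresses separately.
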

\begin{proof}
    Since we are considering the zero divisor, the mirror map associated to $\delta^1$ is simply the square map
    $$
    \begin{array}{cc}
         Sq : & H^1(X_{\trop},\mathcal{F}^1\otimes\FF_2) \rightarrow H^2(X_{\trop},\mathcal{F}^2\otimes\FF_2) \\
         & \beta \mapsto \beta \cup \beta
    \end{array}.
    $$
    By the cohomological version of tropical Lefschetz hyperplane theorem \cite{ARS}, if $n\geq 3$ one has the following commutative diagram
\[\begin{tikzcd}
H^{1}(\T\Sigma_\square,\mathcal{F}^{1}\otimes\FF_2) \arrow{r}{\mathrm{Sq}} \arrow{d} & H^{2}(\T\Sigma_\square,\mathcal{F}^{2}\otimes\FF_2) \arrow{d} \\
H^{1}(X_{\trop},\mathcal{F}^{1}\otimes\FF_2) \arrow{r}{\mathrm{Sq}} & H^{2}(X_{\trop},\mathcal{F}^{2}\otimes\FF_2),
\end{tikzcd}
\]
where the vertical arrow on the left is an isomorphism.  Since the upper horizontal map is zero by Lemma \ref{lemprodcube}, also the lower horizontal map must be zero. This shows, by Theorem \ref{thm:firstBetti}, that $\delta^1=0$.
\end{proof}

\begin{rem}
    The previous proposition is trivially true for $n=1$. For $n=2$, since we have the equality $\chi(\R X^\circ) = \sigma(X^\circ)$, the first Betti number of $\R X$ is uniquely determined by number of connected components, i.e. in the case of the zero divisor $b_1(\R X^\circ) = h^{1,1}(X^\circ)$. This implies that $\delta_1 = 0$. 
\end{rem}

Applying Corollaries \ref{cor:firstBetti} and \ref{cor:firstBetti2} to this case, we have

\begin{cor}
    If the patchworking spectral sequence associated to the zero divisor degenerates at the second page, then the patchwork $\R X^\circ$ has two connected components and its first Betti number is equal to $h^{1,1}(X)+h^{n-1,1}(X)$, where $X\subset (\C P^1)^{n+1}$ is an anticanonical hypersurface.
\end{cor}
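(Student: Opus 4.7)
The plan is to apply Corollary \ref{cor:firstBetti} with the mirror labels swapped, so that $\square^\circ$ plays the role of $\Delta$ and $\square$ plays the role of $\Delta^\circ$. This is the natural matching because the patchworking in the statement is performed on $(\square^\circ, T^\circ)$. Since the cube $\square$ is a smooth reflexive polytope, the hypothesis ``one of $\Delta, \Delta^\circ$ is non-singular'' of that corollary is automatically satisfied.

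The only nontrivial hypothesis to verify is condition \eqref{d1_zero}. Under the swap, it asks that $\beta^{2}\equiv 0\pmod{2}$ for every $\beta\in H^{1}(X_{\trop};\mathcal F^{1}\otimes\FF_{2})$ (the term $D\vert_{X_{\trop}}\cup\beta$ drops out since $D=0$). This is precisely the vanishing already proved inside the argument for Proposition \ref{prop:zeromap}: Lemma \ref{lemprodcube} gives the vanishing of the square map on $H^{1}(\T\Sigma_{\square};\mathcal F^{1}\otimes\FF_{2})$, and the cohomological tropical Lefschetz hyperplane theorem of \cite{ARS} transports this vanishing to $H^{1}(X_{\trop};\mathcal F^{1}\otimes\FF_{2})$ via the commutative square displayed there. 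So I would simply invoke Proposition \ref{prop:zeromap} (or its proof) to secure condition \eqref{d1_zero}.

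Once the hypothesis is checked, the assumed degeneration at the second page together with the first bullet of Corollary \ref{cor:firstBetti} gives that $\R X^{\circ}$ has two connected components and $b_{1}(\R X^{\circ}) = h^{n-1,1}(X^\circ) + h^{1,1}(X^\circ)$, where $X^\circ$ is the anticanonical Calabi-Yau in $\C\Sigma_{T}$. Applying the Hodge mirror identity \eqref{ms_hodge} then converts these numbers into $h^{1,1}(X) + h^{n-1,1}(X)$, where $X$ is an anticanonical hypersurface in a resolution $\C\Sigma_{T^\circ}$ of $(\C P^{1})^{n+1}$. Finally, since $(\C P^{1})^{n+1}$ is itself smooth and a generic anticanonical is smooth Calabi-Yau, $X$ in the resolution is birational to the corresponding generic anticanonical $X\subset (\C P^{1})^{n+1}$, so its Hodge numbers are the same (by the birational invariance of Hodge numbers for smooth Calabi-Yau manifolds).

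The main obstacle is purely notational: one must carefully track the roles of $X$ and $X^{\circ}$ under the mirror swap, since Corollary \ref{cor:firstBetti} is stated from the point of view of patchworking on $\Delta$ rather than on $\Delta^{\circ}$, and one must be comfortable identifying ``Hodge numbers of $X\subset (\C P^1)^{n+1}$'' with those of the smooth Calabi-Yau appearing in the refinement $\C\Sigma_{T^\circ}$. Beyond this bookkeeping, the argument introduces no new technical ingredient whatsoever: everything is already contained in Proposition \ref{prop:zeromap} and Corollary \ref{cor:firstBetti}.
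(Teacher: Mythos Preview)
Your proposal is correct and matches the paper's approach: the paper simply says ``Applying Corollaries~\ref{cor:firstBetti} and~\ref{cor:firstBetti2} to this case'' and states the corollary without further argument, and you have unwound exactly that application, including the label swap and the use of Proposition~\ref{prop:zeromap} to verify hypothesis~\eqref{d1_zero}. One small simplification: since $\square^\circ$ is the cross-polytope, its only primitive central triangulation is the trivial one, so $\C\Sigma_{T^\circ}=(\C P^1)^{n+1}$ on the nose and you do not need to invoke birational invariance of Hodge numbers.
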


\begin{cor}
    In dimension $n=3$ or $4$ the patchwork $\R X^\circ$ has two connected components and its first Betti number is equal to $h^{1,1}(X)+h^{n-1,1}(X)$, where $X\subset (\C P^1)^{n+1}$ is an anticanonical hypersurface. In other words, in dimension $n=3$ or $4$, $\R X^\circ$ is maximal. 
\end{cor}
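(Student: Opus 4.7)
The proof follows by direct application of Corollary~\ref{cor:firstBetti2} with the zero divisor $D=0$, in exactly the same way that the previous corollary in the section is deduced from Corollary~\ref{cor:firstBetti}. The hypotheses of Corollary~\ref{cor:firstBetti2} are verified as follows: $\square$ is a non-singular reflexive polytope (its normal fan is the smooth fan of $(\C P^{1})^{n+1}$); the dimension condition $n=3,4$ is part of the statement; and the vanishing condition \eqref{d1_zero} for the zero divisor is equivalent, via Theorem~\ref{thm:firstBetti}, to the vanishing of the first-page differential
\[
\delta^{1}\colon H^{1}(X^{\circ}_{\trop},\mathcal{F}^{n-1}\otimes\FF_2)\longrightarrow H^{2}(X^{\circ}_{\trop},\mathcal{F}^{n-2}\otimes\FF_2),
\]
which is exactly the content of Proposition~\ref{prop:zeromap}, applicable since $n\ge 3$.

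The first bullet of Corollary~\ref{cor:firstBetti2} then delivers, in one stroke, the three asserted conclusions: that $\R X^{\circ}$ has two connected components; that $b_{1}(\R X^{\circ})=h^{n-1,1}(\widetilde{X})+h^{1,1}(\widetilde{X})$, where $\widetilde{X}$ denotes the mirror Calabi--Yau of $X$ sitting inside the crepant resolution $\C\Sigma_{T}$ of $\C\Sigma_{\square^{\circ}}$; and that $\R X^{\circ}$ is maximal. Invoking the Batyrev--Borisov mirror identity \eqref{ms_hodge}, specifically $h^{n-1,1}(\widetilde{X})=h^{1,1}(X)$ and $h^{1,1}(\widetilde{X})=h^{n-1,1}(X)$ with $X\subset(\C P^{1})^{n+1}$ the anticanonical hypersurface of the statement, rewrites the Betti-number formula in the desired form $h^{1,1}(X)+h^{n-1,1}(X)$.

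No substantial obstacle arises: all ingredients are already assembled in the paper. The only care required is the mirror bookkeeping linking the generic conventions of Corollary~\ref{cor:firstBetti2} to the cube setting of Section~\ref{sec:cube}, which is routine and already tacitly handled in the statement and proof of the preceding corollary.
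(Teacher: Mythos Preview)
Your proposal is correct and follows the same approach as the paper, which simply records the corollary as an immediate application of Corollary~\ref{cor:firstBetti2} together with Proposition~\ref{prop:zeromap}. Your additional remark on the mirror bookkeeping (translating the $h^{p,q}(X^{\circ})$ coming out of Corollary~\ref{cor:firstBetti2} into $h^{p,q}(X)$ via \eqref{ms_hodge}) is exactly the routine identification the paper leaves implicit.
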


\begin{rem}
    In \cite{AP2023} Arguz and Prince, find various four dimensional reflexive polytopes where $\delta^1 = 0$ for the zero divisor, in the context of anti-simplectic involutions preserving a Lagrangian fibration.
\end{rem}

\subsection{Non-zero divisors} Let us now consider the case of a non-zero divisor $D$.
\begin{prop}\label{prop:nonzerodivisor}
    If $n> 4$, the map
$$ 
    \begin{array}{ccc}
      \cup_D:   H^1(X_{\trop},\mathcal{F}^1) & \rightarrow & H^2(X_{\trop},\mathcal{F}^2)   \\
       \beta  & \mapsto & \beta^2+\beta\cup D
    \end{array}
$$
has kernel equal to $\langle D \rangle $.
\end{prop}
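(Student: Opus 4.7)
The plan is to mimic the argument of Proposition~\ref{prop:zeromap}, replacing the squaring map by the affine map $\cup_{D}$ from the statement. Since $\square$ is smooth, the central primitive triangulation $T^{\circ}$ of $\square^{\circ}$ can be chosen so that $\Sigma_{T^{\circ}}=\Sigma_{\square}$ (the facets of the cross-polytope are already primitive simplices), realizing $X_{\trop}$ as a combinatorially ample tropical hypersurface inside the smooth tropical toric variety $\T\Sigma_{\square}$.

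I would then invoke the cohomological tropical Lefschetz hyperplane theorem of \cite{ARS} to obtain the commutative diagram
\[
\begin{tikzcd}
H^{1}(\T\Sigma_{\square},\mathcal{F}^{1}\otimes\FF_{2}) \arrow{r}{\cup_{D}} \arrow{d}{\cong} & H^{2}(\T\Sigma_{\square},\mathcal{F}^{2}\otimes\FF_{2}) \arrow{d}{\cong} \\
H^{1}(X_{\trop},\mathcal{F}^{1}\otimes\FF_{2}) \arrow{r}{\cup_{D}} & H^{2}(X_{\trop},\mathcal{F}^{2}\otimes\FF_{2}),
\end{tikzcd}
\]
whose vertical restriction maps are isomorphisms. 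The left isomorphism holds already for $n\geq 3$, while the right one requires $n\geq 5$, i.e.\ $n>4$; this is precisely the origin of the hypothesis in the statement. By naturality of the cup product, the squaring part of $\cup_{D}$ on the bottom row equals the restriction of the squaring map on the top row, which vanishes identically by the first part of Lemma~\ref{lemprodcube}. Hence on each row $\cup_{D}$ reduces to the linear map $\beta\mapsto\beta\cup D$.

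To conclude, by the second part of Lemma~\ref{lemprodcube}, the kernel of $\beta\mapsto\beta\cup D$ on $H^{1}(\T\Sigma_{\square},\mathcal{F}^{1}\otimes\FF_{2})$ is $\langle D\rangle$. For any $\beta$ in the kernel of the bottom map, lifting $\beta$ to $\tilde{\beta}$ via the left isomorphism and applying the right isomorphism to $\tilde{\beta}\cup D$ forces $\tilde{\beta}\cup D=0$ in the ambient, so $\tilde{\beta}\in\langle D\rangle$ and hence $\beta\in\langle D|_{X_{\trop}}\rangle$. Since $D\neq 0$, the $(1,1)$-isomorphism gives $D|_{X_{\trop}}\neq 0$, so the kernel is the line generated by $D$. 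The only non-routine point is to verify that the version of cohomological Lefschetz from \cite{ARS} indeed provides an isomorphism (and not merely an injection) in bidegree $(2,2)$ under the hypothesis $n>4$; this is simply the one-degree-higher analogue of the $(1,1)$-statement already used in the proof of Proposition~\ref{prop:zeromap}.
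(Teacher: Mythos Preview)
Your argument is correct and follows exactly the route the paper takes: the paper's proof is a single sentence (``It follows again from tropical Lefschetz hyperplane theorem in cohomology and Lemma~\ref{lemprodcube}''), and you have simply unpacked that sentence by writing down the commutative square, noting that the $(2,2)$ Lefschetz isomorphism is what forces $n>4$, and then reading off the kernel from the ambient computation in Lemma~\ref{lemprodcube}.
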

\begin{proof}
    It follows again from tropical Lefschetz hyperplane theorem in cohomology and Lemma \ref{lemprodcube}.
\end{proof}
\begin{cor}\label{cor:Bettinumberdualcube}
If $n>4$ and $D\neq 0$, then 
$$
b_1(\R X_{D}^\circ)=h^{n-1,1}(X),
$$
where $X$ is an anticanonical hypersurface in $(\C P^1)^{n+1}$.
\end{cor}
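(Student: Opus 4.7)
The plan is to compute the first Betti number directly from the cohomological patchworking spectral sequence
$$E^{1}_{p,q}=H^{q}(X^{\circ}_{\trop},\mathcal{F}^{p}\otimes\FF_2)\ \Longrightarrow\ H^{p+q}(\R X^{\circ}_{D};\FF_2),$$
by tracking which slots with $q=1$ survive. Because $\square$ is non-singular, the tropical Lefschetz theorem of \cite{ARS} combined with mirror symmetry (Theorem~\ref{thm:cohommirroriso}), applied as in the proof of Corollary~\ref{cor:firstBetti}, gives $\dim E^{1}_{p,q}=h^{n-p,q}(X)$ with $X$ an anticanonical hypersurface in $(\C P^1)^{n+1}$. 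The non-zero Hodge numbers of such an $X$ lie only on the diagonals $p=q$ and $p+q=n$, so for $q=1$ the only non-vanishing entries on the first page are $E^{1}_{1,1}$ of dimension $h^{n-1,1}(X)$ and $E^{1}_{n-1,1}$ of dimension $h^{1,1}(X)=n+1$, together with the bordering entries $E^{1}_{n,0}=E^{1}_{0,0}=\FF_2$.

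The slot $E^{\infty}_{1,1}$ is unaffected by any differential: outgoing $\delta^{k}$'s land in $E^{k}_{1-k,2}$, which is either out of range for $k\geq 2$ or equal to $E^{1}_{0,2}=h^{n,2}(X)=0$ when $k=1$; incoming $\delta^{k}$'s come from $E^{k}_{1+k,0}$, which by the Hodge constraint can only be non-zero at $k=n-1$, and the argument below forces $E^{n-1}_{n,0}=0$. Hence $E^{\infty}_{1,1}=E^{1}_{1,1}=h^{n-1,1}(X)$.

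The main step is the vanishing $E^{\infty}_{n-1,1}=0$. Since $D\neq 0$ and tropical Lefschetz provides an isomorphism $H^{1}(\T\Sigma_{\square},\mathcal{F}^{1}\otimes\FF_2)\cong H^{1}(X_{\trop},\mathcal{F}^{1}\otimes\FF_2)$, one has $D|_{X_{\trop}}\neq 0$; Theorem~\ref{thm:conncomp} then guarantees that $\R X^{\circ}_{D}$ is connected, so $E^{\infty}_{n,0}=0$. As the higher differential $\delta^{n-1}:E^{n-1}_{n,0}\to E^{n-1}_{1,1}$ is identically zero (established in the proof of Theorem~\ref{thm:conncomp} in \cite{MR24}, and recalled in the proof of Corollary~\ref{cor:firstBetti2}), the annihilation of $E^{\bullet}_{n,0}$ must happen at the second page, forcing the first-page differential $\delta^{1}:E^{1}_{n,0}\to E^{1}_{n-1,1}$ to be injective and hence to carve out a $1$-dimensional image inside $E^{1}_{n-1,1}$. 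By Theorem~\ref{thm:firstBetti}, the outgoing differential $\delta^{1}:E^{1}_{n-1,1}\to E^{1}_{n-2,2}$ is identified under mirror symmetry with the map $\beta\mapsto \beta^{2}+D\cup\beta$ on $H^{1}(X_{\trop},\mathcal{F}^{1}\otimes\FF_2)$, whose kernel Proposition~\ref{prop:nonzerodivisor} computes to be $\langle D\rangle$, also $1$-dimensional. The relation $\delta^{1}\circ\delta^{1}=0$ forces the image of the incoming $\delta^{1}$ to lie in the kernel of the outgoing $\delta^{1}$, and as both are $1$-dimensional subspaces of $E^{1}_{n-1,1}$ they must coincide, giving $E^{2}_{n-1,1}=0$. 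For $n>4$ the Hodge diagonal constraint prevents any higher-page source or target from being non-zero, so $E^{\infty}_{n-1,1}=0$.

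Summing gives $b_{1}(\R X^{\circ}_{D})=\dim E^{\infty}_{1,1}+\dim E^{\infty}_{n-1,1}=h^{n-1,1}(X)$. The main technical point is precisely the coincidence of image and kernel inside $E^{1}_{n-1,1}$; it is this matching that turns the naive estimate $h^{1,1}(X)+h^{n-1,1}(X)-1$ of the Corollary~\ref{cor:firstBetti} type — which would apply if $\cup_{D}$ were identically zero — into the sharper value $h^{n-1,1}(X)$, reflecting the fact that the non-trivial $\cup_{D}$ in the cube case collapses the entire $(n+1)$-dimensional slot $E^{1}_{n-1,1}$.
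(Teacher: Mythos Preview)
Your proof is correct and follows essentially the same route as the paper's: both use Proposition~\ref{prop:nonzerodivisor} and Theorem~\ref{thm:conncomp} to establish $E^{2}_{n-1,1}=E^{2}_{n,0}=0$, and then read off $b_1=\dim E^{\infty}_{1,1}=h^{n-1,1}(X)$ via mirror symmetry and the tropical Lefschetz theorem. Your exposition spells out the dimension count (the $1$-dimensional image of $\delta^{1}\!:E^{1}_{n,0}\to E^{1}_{n-1,1}$ coinciding with the $1$-dimensional kernel from Proposition~\ref{prop:nonzerodivisor}) that the paper leaves implicit, and you deduce injectivity of that first $\delta^{1}$ slightly indirectly via connectedness plus vanishing of higher differentials, whereas the paper invokes directly (as in the proof of Corollary~\ref{cor:firstBetti}) that this differential is nonzero precisely when $D|_{X_{\trop}}\neq 0$.
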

\begin{proof}
    By Proposition \ref{prop:nonzerodivisor} and Theorem \ref{thm:conncomp}, the $\FF_2$ vector spaces $E^2_{n-1,1}$ and $E^2_{n,0}$ of the second page of the patchworking spectral sequence are zero. Since $E^2_{n,0}=0$, the higher map in the patchworking spectral sequence $E^{n-1}_{n,0}\rightarrow E^{n-1}_{1,1}$ is zero, and so $$b_1(\R X_{D})=\dim H^1(X^\circ_{\trop};\F^1\otimes\FF_2).$$
    Moreover by Theorem \ref{thm:cohommirroriso}, one has
    $$
    H^1(X^\circ_{\trop};\F^1\otimes\FF_2)\cong H^1(X_{\trop};\F^{n-1}\otimes\FF_2).
    $$
    Since the toric variety $\Sigma_\square$ is non-singular, the tropical hypersurface $X_{\trop}$ is combinatorially ample and by \cite{ARS} 
    $$
    \dim H^1(X_{\trop};\F^{n-1}\otimes\FF_2)=h^{n-1,1}(X).
    $$ \end{proof}

\begin{rem} In the cases $n=3$ we argue that the map $\beta \mapsto \beta^2+\beta\cup D$ is always zero. We give a non-rigorous proof as follows. 
First of all the map is linear in $D$, hence it is enough to prove it assuming $D = D_{e_j}$ (notation as in proof of Lemma \ref{lemprodcube}). Since
$\beta^2=0$ for all $\beta$, it is enough to assume $\beta = D_{e_k}$ and prove that $(D_{e_j} \cup D_{e_k})_{|X_{\trop}} = 0$ for all $j$ and $k$. By 
Poincaré duality, we can prove this by showing that $(D_{e_j} \cup D_{e_k})_{|X_{\trop}} \cup \alpha = 0$ for all $\alpha \in H^{1}(X_{\trop}, \mathcal F^{1} \otimes \FF_2)$. 
Again, by the Lefschetz theorem for $n=3$, 
\[ H^{1}(X_{\trop}, \mathcal F^{1} \otimes \FF_2) \cong H^{1}(\T\Sigma_{\square}, \mathcal F^{1} \otimes \FF_2). \] 
In particular it is enough to show that $(D_{e_j} \cup D_{e_k} \cup D_{e_m})_{|X_{\trop}} =0$ for all $j,k,m$. 
Such classes, before restricting them to $X_{\trop}$, are either $0$ or correspond in $H^{1}(\T\Sigma_{\square}, \mathcal F^{1} \otimes \FF_2)$ to the Poincaré dual of an edge of the cube $\square$ (this happens if all the indices are distinct). 
An edge intersects $X_{\trop}$ transversely in two points, just like over $\C$,  the line $\PP^1 \times \{0\} \ldots \times \{ 0 \}$ intersects $X$ twice.  Since we are over $\FF_2$, this should imply that the restriction to $X_{\trop}$ is zero. The only reason why this is not a rigorous proof it that we do not know if, in tropical cohomology, the Poincaré dual of a transverse intersection of tropical cycles is equal to the cup product of the Poincaré duals. If this argument is correct, we would have that $\R X^\circ$ is connected and its first Betti number is $h^{1,1}(X) + h^{n-1,1}(X) - 1$, as follows from Corollary \ref{cor:firstBetti2}.
\end{rem}

\section{Maximizing the first Betti number \label{sec:maxfirstBetti}}
\subsection{Maximizing the first Betti number in dimension 3}
Let $(X_{\trop}, X_{\trop}^\circ)$ be a pair of $3$-dimensional mirror tropical hypersurfaces arising from central primitive triangulations $(\Delta, T)$ and $(\Delta^\circ, T^{\circ})$ of dual pairs of $4$-dimensional reflexive polytopes.  We try to maximize the first Betti number of a patchworking $\R X$ associated to a divisor $D \in H^1(X^\circ_{\trop}, \mathcal F^1 \otimes \FF_2)$. As we saw in Corollary \ref{cor:firstBetti2}, in the case $D=0$, $\R X$ is maximal if and only if $\beta^2 = 0$ for all $\beta \in H^1(X^\circ_{\trop}, \mathcal F^1 \otimes \FF_2)$. Otherwise, if $D \neq 0$, $\R X$ is never maximal and the maximal possible first Betti number is $h^{1,1}(X) + h^{2,1}(X) -1$, which occurs if $\beta^2 + \beta \cup D = 0$ for all $\beta \in H^1(X^\circ_{\trop}, \mathcal F^1 \otimes \FF_2)$. We conjecture the following

\begin{con}
If a central primitive triangulation \(T\) admits a class 
\(\beta \in H^{1}(X^\circ_{\trop}, \mathcal F^{1} \otimes \mathbb{F}_2)\) with \(\beta^{2} \neq 0\), then no central primitive patchworking of \(\Delta\) can produce a maximal variety.
\end{con}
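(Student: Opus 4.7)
The plan is to prove the contrapositive: if some central primitive patchworking of $\Delta$ produces a maximal real variety $\R X_D$, then the squaring map on $H^{1}(X^\circ_{\trop}, \mathcal F^{1} \otimes \FF_2)$ is identically zero, contradicting the hypothesis. The argument makes essential use of Theorem \ref{thm:firstBetti} together with a cohomological counterpart of Theorem \ref{thm:conncomp}.

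First I would observe that maximality of $\R X_D$, combined with the convergence of the cohomological patchworking spectral sequence, forces $\dim E^{\infty}_{p,q} = \dim E^{1}_{p,q}$ for every bidegree $(p,q)$. Each $E^{\infty}_{p,q}$ is a subquotient of $E^{1}_{p,q}$, so $\dim E^{\infty}_{p,q} \le \dim E^{1}_{p,q}$; Smith-Thom equality of the totals then forces termwise equality, and every differential on every page of the spectral sequence must vanish.

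Next I would apply this vanishing to two relevant first-page differentials. Since $\delta^{k}\colon E^{k}_{p,q}\to E^{k}_{p-k,q+1}$, no differentials enter the column $p=n$ at $q=0$, so maximality forces $E^{\infty}_{n,0} = E^{1}_{n,0} \cong \FF_2$. In particular $\delta^{1}_{D}\colon E^{1}_{n,0}\to E^{1}_{n-1,1}$ must vanish. The cohomological version of Theorem \ref{thm:conncomp}, used implicitly in the proof of Corollary \ref{cor:firstBetti}, identifies this vanishing with the condition $D|_{X^\circ_{\trop}} = 0$. Substituting into the differential $\delta^{1}_{D}\colon E^{1}_{n-1,1}\to E^{1}_{n-2,2}$, which by Theorem \ref{thm:firstBetti} corresponds via mirror symmetry to $\beta\mapsto \beta^{2} + D|_{X^\circ_{\trop}}\cup\beta$, yields $\beta^{2}=0$ for every $\beta\in H^{1}(X^\circ_{\trop},\mathcal F^{1}\otimes\FF_2)$, contradicting the hypothesis.

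The main obstacle lies in making the cohomological version of the connectedness criterion precise in the full generality of the conjecture. Theorem \ref{thm:conncomp} is stated under the assumption that $H_{n}(X_{\trop};\mathcal F_{k}\otimes\FF_2)$ vanishes for $0<k<n$, a condition that is known to hold when $\Delta$ is non-singular; in that regime the argument above proves the conjecture completely. To cover an arbitrary reflexive $\Delta$ with a primitive central triangulation, one would need either to extend the cohomological connectedness criterion beyond this vanishing hypothesis, perhaps by analyzing $H^{0}(X_{\trop},\mathcal F^{n}\otimes\FF_2)$ and $\delta^{1}_{D}$ on it directly, or to develop the deeper structure of the patchworking spectral sequence in the spirit of Steenrod operations, so that the squaring map itself appears as an intrinsic obstruction to maximality without needing $D|_{X^\circ_{\trop}} = 0$ as an intermediate step.
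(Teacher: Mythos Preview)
The statement is labelled a \emph{conjecture} in the paper; the authors offer no proof, only heuristic evidence drawn from the flip-invariance results of Arguz--Prince in a related setting. So there is no proof in the paper to compare yours against.

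That said, your argument does not settle the conjecture even granting every technical hypothesis you flag. Your contrapositive reads: if a patchworking built on the \emph{same} triangulation $T$ that appears in the hypothesis is maximal, then the spectral sequence for that $T$ degenerates at $E^1$, and combining the connectedness criterion with Theorem~\ref{thm:firstBetti} forces the squaring map on $H^1(X^\circ_{\trop},\mathcal F^1\otimes\FF_2)$ to vanish. Under the non-singularity assumption this is correct and yields the weaker statement that no sign distribution on the \emph{fixed} $T$ produces a maximal variety. But the conjecture, as the paper's own commentary makes explicit, asserts non-maximality for \emph{every} central primitive patchworking of $\Delta$, i.e.\ for every choice of triangulation $T'$, not just the $T$ in the hypothesis. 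The evidence the authors cite is precisely that, in the Lagrangian-fibration setting, the topology of the real locus is invariant under flips of the triangulation; transporting that invariance to the patchworking framework is exactly what remains conjectural. Your argument never touches a second triangulation: if some $T'\neq T$ produced a maximal $\R X$, your degeneration argument would only show that the squaring map vanishes on the mirror attached to $T'$, which says nothing about the mirror attached to $T$. This passage between triangulations is the genuine open content of the conjecture; the obstacle you highlight in your final paragraph (extending the connectedness criterion beyond the non-singular case) is comparatively minor.
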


We believe this conjecture is reasonable following the work of Arguz-Prince \cite{AP2020}. In the context of anti-symplectic involutions preserving a Lagrangian fibration, they study the zero divisor case, and they show that the topology of the real locus does not change under ``flips'' of the triangulation (see Appendix A of \cite{AP2020}). In particular the first Betti number is independent of the triangulation. They also compute that for the mirror of the quintic in $\PP^4$, the map $\beta \mapsto \beta^2$ is not zero for a particular triangulation \cite{AP2023}, hence it is never zero. We expect that also in our context of patchworking on central triangulations, there does not exist a maximal quintic. On the other hand, in Theorem \ref{dim3_maximal}, we show that for a certain class of smooth reflexive polytopes, for any central triangulation, we can find a patchworking with $b_1(\R X) = h^{1,1}(X) + h^{1,2}(X) -1$. This applies in particular to quintics. In the context of Lagrangian fibrations, this result was also found for a particular triangulation, by the first author in \cite{Matessi2024}.

\subsection{Local configurations} \label{sec:localconf}
In this section we assume $n \geq 3$. We look for a divisor $L \in H^1(X_{\trop}^\circ, \mathcal F^1\otimes \FF_2) $ such that
\begin{equation} \label{div_maximal}
 \beta^2 + \beta \ \cup L = 0, \quad \text{for all} \ \ \beta \in H^1(X_{\trop}^\circ, \mathcal F^1\otimes \FF_2).
\end{equation}
First of all, thanks to the surjectivity Theorem \ref{thm:surj}, we may assume $\beta$ and $L$ to be the restriction of a divisor in $H^{1}(\T\Sigma_{T},\mathcal{F}^{1}\otimes \FF_2)$. Recall also that toric divisors of type $D_v$, with $v$ a vertex of $T$, restrict to zero whenever $v$ is contained in the interior of a facet of $\Delta$ (see Remark \ref{rem:divinfacet}). Therefore we may write $L$ as
\[ L = \sum_{v \in \Sk_{n-1}(\partial \Delta)}\epsilon_v D_v.\]
for some coefficients $\epsilon_v \in \FF_2$. Obviously we may also think of $L$ as the set of $v$'s such that $\epsilon_v = 1$. Then $L$ satisfies \eqref{div_maximal} if and only if 
\begin{equation} \label{divisor_max_loc}
    D_r^2 = D_r \cup L \quad \text{for all} \ r \in \Sk_{n-1} \partial \Delta. 
\end{equation} 
We now observe that $D_r \cup D_v = 0$ whenever $r$ and $v$ do not lie on a common edge of $T$, since this product already vanishes in the ambient toric variety. We say that two vertices $r$ and $v$ are adjacent, and we denote it by $r \leftrightarrow v$, if $r$ and $v$ belong to a common edge of $T$. Notice that $r \leftrightarrow r.$ Now, for every $r \in \Sk_{n-1} \partial \Delta$, we denote by 
\[ \mathcal L_r = \{v \in L \, | \, v \leftrightarrow r \  \}.\]
Then \eqref{divisor_max_loc} holds if and only if
\begin{equation} \label{dmax_loc}
   D_r\cup (D_r + \sum_{v \in \mathcal L_r} D_v) = 0  
\end{equation} 
for all $r \in \Sk_{n-1} \partial \Delta$. 

Recall that two toric divisors $D_1$ and $D_2$ are linearly equivalent (over $\FF_2$), if and only if there exists $m \in N \otimes \FF_2$ such that 
\[ D_1+D_2 = \sum_{j} \langle m, v_j \rangle D_{v_j}, \]
where the sum is over all primitive generators $v_j$ of the rays of the fan. We write, in this case, $D_1 \sim D_2$.
We now fix $r \in \Sk_{n-1} \partial \Delta$ and look for configurations of $L$ around $r$ which ensure that \eqref{dmax_loc} holds for that particular $r$.  It is convenient to denote by $o(D_r)$ any divisor which does not contain vertices adjacent to $r$. In particular $D_r \cup o(D_r) = 0$. We have that \eqref{dmax_loc} certainly holds if 
\begin{equation} \label{loc_triv}
     D_r + \sum_{v \in \mathcal L_r} D_v \, \sim \, o(D_r).
\end{equation}
We have the following cases. 

\medskip

{\bf Case 1.} If $r \in \mathcal L_r$, then \eqref{loc_triv} holds if $\sum_{v \in \mathcal L_r- \{r\}} D_v \, \sim \, o(D_r)$. This means that there exists an $m \in N \otimes \FF_2$ such that 
\begin{itemize}
        \item $\inn{m}{r} =0 \mod 2$;
        \item $\inn{m}{v} =1 \mod 2$ for all $v \in \mathcal L_r - \{r\}$;
        \item $\inn{m}{w} =0 \mod 2$ for all $w \leftrightarrow r$ such that $w \notin \mathcal L_r$.
\end{itemize}
 Notice this includes also the case where $\mathcal L_r = \{r \}$, i.e. $r$ is isolated in $L$. Indeed, in this case, we can take $m=0$.   

\medskip

{\bf Case 2.} If $r \notin \mathcal L_r$,  then \eqref{loc_triv} holds if there exists $m \in N \otimes \FF_2$ such that
\begin{itemize}
         \item $\inn{m}{r} =1 \mod 2$;
        \item $\inn{m}{v} =1 \mod 2$ for all $v \in \mathcal L_r$;
        \item $\inn{m}{w} =0 \mod 2$ for all $w \leftrightarrow r$ such that $w \notin \mathcal L_r$ and $w \neq r$.
\end{itemize}
This also includes the case where $r$ is in the interior of a codimension $2$ face of $\Delta$ and every vertex adjacent to $r$ is in $L$, except $r$. 

We have that if for every $r \in \Sk_{n-1} (\partial \Delta)$, the set $\mathcal L_r$ falls in one of the two cases above, then $L$ satisfies \eqref{div_maximal}.

\subsection{Local examples in dimension 3} \label{sec:localexample} We now assume that $\Delta \subset M_{\R}$ is a non-singular $4$-dimensional reflexive polytope with some primitive central triangulation $T$. We look at some examples of local configurations for a divisor $L$ satisfying \eqref{div_maximal}.  Define the parity of a vector $v \in M$ to be its reduction in $M \otimes \FF_2$, which we denote by $[v]$. Given a face $F$ of $\Delta$, we will denote by $[F] \subseteq M \otimes \FF_2$ the $\mod 2$ reduction of the affine subspace spanned by $F$. Notice that if $F$ is a proper face, since $\Delta$ is reflexive, $[F]$ never passes through the origin. 

\subsubsection{Interior of a $2$-dimensional face} \label{interior_face}
    Let $F$ be a $2$-dimensional face of $\Delta$.  Fix a parity $p \in [F]$. Consider a divisor $L$ such that $L \cap F$ consists only of those points of $F$ with parity $p$. Let $r$ be a point in the interior of $F$. We check if $\mathcal L_r$ falls into one of the two cases above.  Suppose first that $r \in L$. Then it is isolated. Indeed, since the triangulation $T$ is primitive, $r$ is the only point adjacent to $r$ of parity $p$. So $\mathcal L_r$ falls in Case 1. Suppose now that $r \notin L$. In particular, two of the four parities inside $[F]$ are $[r]$ and $p$. Consider a $3$-dimensional vector subspace in $M \otimes \FF_2$, transverse to $[F]$ and intersecting it precisely in the two parities which are different from $[r]$ and $p$. Let $m \in N \otimes \FF_2$ be the orthogonal vector to this three dimensional vector subspace. Then $\inn{m}{p} = \inn{m}{[r]} = 1$. If $v$ is adjacent to $r$, but $v \neq r$, then $[v] \neq [r]$. In particular,  if $v \in \mathcal L_r$ then its parity is $p$, in which case $\inn{m}{v} = 1$. Otherwise, if $v \notin \mathcal L_r$, then $[v] \neq p$ and therefore $\inn{m}{v} = 0$.  Then $\mathcal L_r$ falls in Case $2$. Notice that these arguments are independent of the triangulation $T$.
\subsubsection{Around a vertex}
Let $r$ be a vertex of $\Delta$. Assume that we have chosen a parity $p_F \in [F]$ for each two dimensional face $F$ containing $r$ and define $L$ so that $L \cap F$ consists only of points of parity $p_F$, for every $F$. Clearly, given $F$ and $F'$, if $L \cap (F \cap F') \neq \emptyset$, then we must have $p_F = p_{F'}$. Since $\Delta$ is smooth, the dual face $r^{\vee}$ is a facet of $\Delta^{\circ}$ whose vertices $n_1, \ldots, n_4$ form a basis of $N$. Moreover,  $\inn{[n_j]}{[r]} = 1$ for all $j=1, \ldots r$. There are four edges containing $r$, denote them by $E_1, \ldots, E_4$ so that $E_j$ is contained in the affine line defined by the linear equations $\inn{n_k}{v}=1$, for all $k\neq j$. There are six $2$-dimensional faces containing $r$. Denote by $F_{jk}$ the one that contains $E_j$ and $E_k$, ($j \neq k$). We have that $F_{jk}$ is contained in the affine plane given by equations $\inn{n_i}{v}=1$ for all $i \neq j, k$. Denote by $v_j$ the integral point of $E_j$, adjacent to $r$ (and different from $r$). Notice that 
\begin{equation} \label{n_on_v}
    \inn{[n_j]}{[v_j]} = 0 \quad \text{and} \quad  \inn{[n_i]}{[v_j]} = 1 \quad \text{for all} \ i \neq j
\end{equation}
Let $p_{jk}$ denote the chosen parity on the face $F_{jk}$. Among all possible configurations, we look for the admissible ones, i.e. those where  $\mathcal L_r$ satisfies Case 1 or Case 2. 

\begin{ex}[The simple vertex] \label{simple_vertex}
  Suppose that $p_{jk} = [r]$ for all $j$ and $k$. Then $r \in \mathcal L_r$ and it is isolated. Therefore the configuration is admissible since it satisfies Case 1.  See Figure \ref{figure1}. 
\end{ex}
\begin{figure}[ht]
  \begin{minipage}[b]{0.45\linewidth}

    \centering
    \begin{tikzpicture}
        \draw (0,0) node[below] {$r$}--(2,0) node[right] {$E_1$} ;
        \fill[red] (0,0) circle (1.5pt);
        \fill[black] (1,0) circle (1.5pt);
        \fill[red] (2,0) circle (1.5pt);
        \draw (0,0)--(1.5,1.5) node[above right] {$E_2$} ;
        \fill[black] (0.75,0.75) circle (1.5pt);
        \fill[red] (1.5,1.5) circle (1.5pt);
         \draw (0,0)--(0,2) node[above] {$E_3$};
        \fill[black] (0,1) circle (1.5pt);
        \fill[red] (0,2) circle (1.5pt);
        \draw (0,0)--(1.5,-1.5) node[below right] {$E_4$};
        \fill[black] (0.75,-0.75) circle (1.5pt);
        \fill[red] (1.5,-1.5) circle (1.5pt);
    \end{tikzpicture}
    \end{minipage} \hfill
    \begin{minipage}[b]{0.45\linewidth}
    \begin{tikzpicture}
        \draw (0,1) node[below] {$r$}--(2,1) node[right] {$E_j$} ;
        \fill[red] (0,1) circle (1.5pt);
        \fill[black] (1,1) circle (1.5pt);
        \fill[red] (2,1) circle (1.5pt);
        \draw (0,1)--(0,3) node[above] {$E_k$};
        \fill[black] (0,2) circle (1.5pt);
        \fill[red] (0,3) circle (1.5pt);
        \fill[black] (1,2) circle (1.5pt);
        \fill[black] (2,2) circle (1.5pt);
        \fill[black] (1,3) circle (1.5pt);
        \fill[red] (2,3) circle (1.5pt);
    \end{tikzpicture}
    \end{minipage}
    \caption{The simple vertex: the chosen parity (in red) in any face is the parity of the vertex.}
    \label{figure1}
\end{figure}
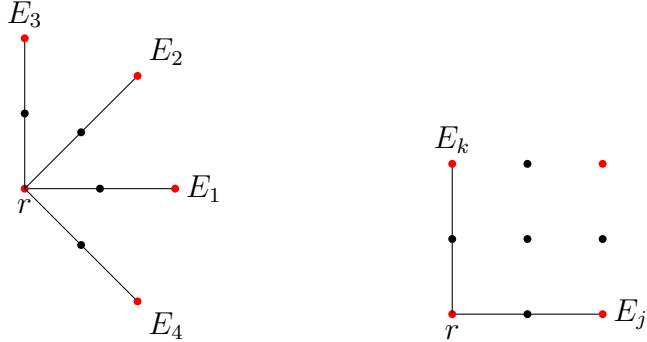

\begin{ex}[The pointed vertex] \label{pointed_vertex}
Fix some $j$ and define the configuration as follows. On all faces containing $E_j$ the chosen parity coincides with $[v_j]$, i.e. $p_{jk} = [v_j]$ for all $k$. On the remaining faces $F_{ik}$ (with $j \neq i,k$), the parity $p_{ik}$ is the unique one which is different from $[r]$, $[v_i]$ and $[v_k]$.  In this case $r \notin \mathcal L_r$. Let us check that $\mathcal L_r$ satisfies Case 2. Consider $m = \sum_{i \neq j} [n_i]$. First of all $\inn{m}{[r]}= 1$. By \eqref{n_on_v}, $\inn{m}{p_{jk}}= \inn{m}{[v_j]}= 1$. Now consider $p_{ik}$ with $j \neq i,k$. We have that $\inn{[n_\ell]}{p_{ik}}=1$ for all $\ell \neq i, k$ while $\inn{[n_i]}{p_{ik}} = \inn{[n_k]}{p_{ik}}= 0$. In particular $\inn{m}{p_{ik}} = 1$.
This shows that $m$ satisfies the first two bullets listed in Case 2. Suppose now that $w$ is adjacent to $r$, $w \notin \mathcal L_r$ and $w \neq r$. If $w \in F_{ik}$ with $j \neq i,k$, then its parity coincides with either $v_i$ or $v_k$. But then, by \eqref{n_on_v}, $\inn{m}{[w]} = 0$. On the other hand, if $w \in F_{jk}$, then either $[w] = [v_k]$, or $[w] \neq [v_j], [v_k], [r]$. In the first case we already know that $\inn{m}{[w]} = 0$, for the previous argument. In the second case,  $\inn{[n_\ell]}{[w]}=1$ for all $\ell \neq j, k$ and $\inn{[n_j]}{[w]} = \inn{[n_k]}{[w]}= 0$. Then $\inn{m}{[w]} = 0$. Therefore the configuration is admissible since $\mathcal L_r$ satisfies Case 2. See Figure \ref{figure2}.
\end{ex}

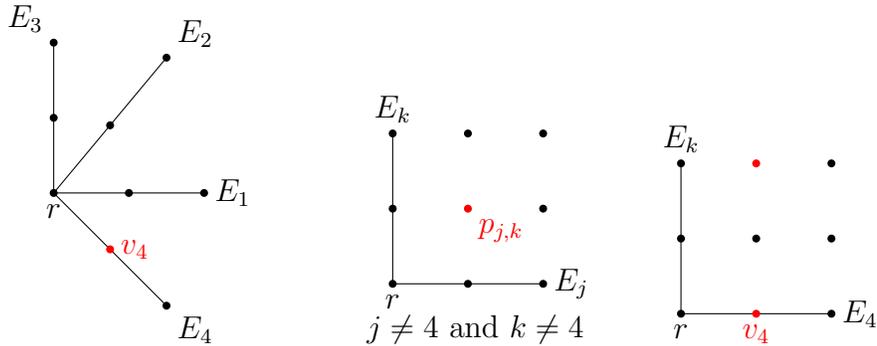
\begin{figure}[ht]
\begin{minipage}[b]{0.3\linewidth}
    \centering
    \begin{tikzpicture}
        \draw (0,0) node[below] {$r$}--(2,0) node[right] {$E_1$} ;
        \fill[black] (0,0) circle (1.5pt);
        \fill[black] (1,0) circle (1.5pt);
        \fill[black] (2,0) circle (1.5pt);
        \draw (0,0)--(1.5,1.8) node[above right] {$E_2$} ;
        \fill[black] (0.75,0.9) circle (1.5pt);
        \fill[black] (1.5,1.8) circle (1.5pt);
         \draw (0,0)--(0,2) node[above left] {$E_3$};
        \fill[black] (0,1) circle (1.5pt);
        \fill[black] (0,2) circle (1.5pt);
        \draw (0,0)--(1.5,-1.5) node[below right] {$E_4$};
        \fill[red] (0.75,-0.75) node[right] {$v_{4}$} circle (1.5pt);
        \fill[black] (1.5,-1.5) circle (1.5pt);
    \end{tikzpicture}
    \end{minipage}
    \hfill
    \begin{minipage}[b]{0.3\linewidth}
    \begin{tikzpicture}
        \draw (0,0) node[below] {$r$}--(2,0) node[right] {$E_j$} ;
        \fill[black] (0,0) circle (1.5pt);
        \fill[black] (1,0)   circle (1.5pt);
        \draw node at (1.1,-0.6) {$j\neq 4$ and $k\neq 4$};
        \fill[black] (2,0) circle (1.5pt);
        \draw (0,0)--(0,2) node[above] {$E_k$};
        \fill[black] (0,1) circle (1.5pt);
        \fill[black] (0,2) circle (1.5pt);
        \fill[black] (2,1) circle (1.5pt);
        \fill[red] (1,1) node[below right] {$p_{j,k}$} circle (1.5pt);
        \fill[black] (2,2) circle (1.5pt);
        \fill[black] (1,2) circle (1.5pt);
    \end{tikzpicture}
    \end{minipage}
    \begin{minipage}[b]{0.3\linewidth}
            \begin{tikzpicture}
        \draw (0,0) node[below] {$r$}--(2,0) node[right] {$E_4$} ;
        \fill[black] (0,0) circle (1.5pt);
        \fill[red] (1,0)  node[below] {$v_4$} circle (1.5pt);
        \fill[black] (2,0) circle (1.5pt);
        \draw (0,0)--(0,2) node[above] {$E_k$};
        \fill[black] (0,1) circle (1.5pt);
        \fill[black] (0,2) circle (1.5pt);
        \fill[black] (2,1) circle (1.5pt);
        \fill[black] (1,1)  circle (1.5pt);
        \fill[black] (2,2) circle (1.5pt);
        \fill[red] (1,2) circle (1.5pt);
    \end{tikzpicture}
    \end{minipage}
    \caption{The pointed vertex for $E_4$. The chosen parity is drawn in red in any face.}
    \label{figure2}
\end{figure}

\begin{ex}[Non-admissible vertex]  
Suppose that for all $j$ and $k$, $p_{jk}$ is different from the parities $[r]$, $[v_j]$ and $[v_k]$. So that $r$ and the $v_j$'s are not in $\mathcal L_r$. Since the parities $[v_1], \ldots, [v_4]$ form a basis of $M \otimes \FF_2$, the only $m \in N \otimes \FF_2$ which satisfies $\inn{m}{[v_j]} =0$ is $m=0$. But then we also have $\inn{m}{[r]} =0$. Therefore this configuration is not admissible since it cannot satisfy neither Case 1 nor Case 2. 
\end{ex}
\subsubsection{Around an edge}
Let $E$ be an edge of $\Delta$. Again, assume that we have chosen a parity $p_F \in [F]$ for each two dimensional face $F$ containing $E$ and define $L$ so that, $L \cap F$ consists only of points of parity $p_F$, for every $F$. 
The dual face $E^{\vee}$ in $\Delta^{\circ}$ is a two dimensional simplex with vertices $n_1, n_2, n_3$, so that $E$ is contained in the affine line given by equations $ \inn{n_j}{v}=1$ for $j = 1,2,3$. There are three $2$-dimensional faces containing $E$. Denote by $F_j$ the one defined by the equations $\inn{n_k}{v}=1$ for all $k \neq j$. We look for admissible configurations around an edge $E$, i.e. those for which $\mathcal L_r$ satisfies Case 1 or Case 2 for every $r$ in the interior of $E$.

\begin{ex}[The simple edge] \label{simple_edge}
Choose a parity $p_E$ on $[E]$. Define $p_F = p_E$ for every $F$ containing $E$. If $[r]=p_E$, then $r$ is isolated and therefore $\mathcal L_r$ satisfies Case 1. If $[r]\neq p_E$, let $m= \sum_{j=1}^{3} [n_j]$. Then $\inn{m}{[r]}=\inn{m}{p_E}= 1$. So that $\mathcal L_r$ satisfies the first two bullets of Case 2. The affine hyperplane in $M \otimes \FF_2$ cut by the equation $\inn{m}{v}=1$, is transverse to all affine planes $[F_j]$ and intersects them exactly in the two points $[r]$ and $p_E$. In particular, for any $w \in F$ adjacent to $r$, $w \notin \mathcal L_r$ and $w \neq r$, we must have $\inn{m}{w}=0$. Therefore $\mathcal L_r$ satisfies Case 2 and the configuration is admissible. See Figure \ref{figure3}. Notice that any edge $E$ adjacent to the simple vertex in Example \ref{simple_vertex} replicates the configuration just described here if $p_E$ is the parity of the vertex. Similarly, in the pointed vertex Example \ref{pointed_vertex}, this configuration is replicated by the edge $E_j$, when $p_E$ is the parity of $v_j$. 
\end{ex}
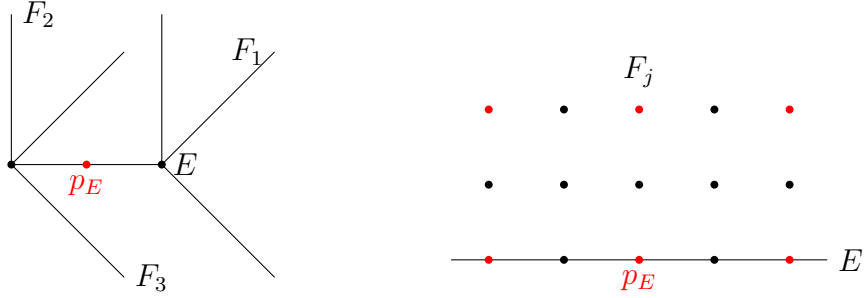
\begin{figure}[ht]
  \begin{minipage}[b]{0.45\linewidth}

    \centering
    \begin{tikzpicture}
        \draw (0,0)--(2,0) node[right] {$E$} ;
        \fill[black] (0,0) circle (1.5pt);
        \fill[red] (1,0) node[below] {$p_E$} circle (1.5pt);
        \fill[black] (2,0) circle (1.5pt);
        \draw (0,0)--(1.5,1.5) ;
        \draw (2,0)--(3.5,1.5) node[left] {$F_1$};
        \draw (0,0)--(0,2) node[right] {$F_2$};
        \draw (2,0)--(2,2) ;
        \draw (0,0)--(1.5,-1.5) node[right] {$F_3$};
        \draw (2,0)--(3.5,-1.5) ;
    \end{tikzpicture}
    \end{minipage} \hfill
    \begin{minipage}[b]{0.45\linewidth}
    \begin{tikzpicture}
        \draw (-1.5,1)--(3.5,1) node[right] {$E$} ;
        \fill[black] (0,1) circle (1.5pt);
        \fill[red] (1,1) node[below] {$p_E$} circle (1.5pt);
        \fill[black] (2,1) circle (1.5pt);
        \fill[black] (0,2) circle (1.5pt);
        \fill[black] (0,3) circle (1.5pt);
        \fill[black] (1,2) circle (1.5pt);
        \fill[black] (2,2) circle (1.5pt);
        \fill[red] (1,3) circle (1.5pt);
        \fill[red] (3,1) circle (1.5pt);
        \fill[red] (-1,1) circle (1.5pt);
        \fill[red] (3,3) circle (1.5pt);
        \fill[red] (-1,3) circle (1.5pt);
        \fill[black] (3,2) circle (1.5pt);
        \fill[black] (-1,2) circle (1.5pt);
        \fill[black] (2,3)  circle (1.5pt);
        \draw node at (1,3.5) {$F_j$};
    \end{tikzpicture}
    \end{minipage}
    \caption{The simple edge: the chosen parity (in red) in any face is a parity of the edge.}
    \label{figure3}
\end{figure}
\begin{ex}[The empty edge]
   In this configuration none of the chosen parities $p_F$ lie on $E$, in particular $E$ does not contain elements of $L$. First choose $p(F_1)$ and $p(F_2)$ (not lying on $E$). Then there is just one parity on $F_3$, not lying on $E$, which is linearly independent of $p(F_1)$ and $p(F_2)$. Let $p(F_3)$ be this parity. For any $r$ in $E$, the points $[r], p_{F_1}, p_{F_2}, p_{F_3}$ form a basis of $M \otimes \FF_2$. In particular there exists a unique affine hyperplane passing through  $[r], p_{F_1}, p_{F_2}, p_{F_3}$ (and not through the origin), i.e. there is an $m$ such that $\inn{m}{[r]} = \inn{m}{p_{F_j}}= 1$ for all $j=1,2,3$. Such an hyperplane must also be transverse to all $[F_j]$'s. In particular if $w \in F$ is adjacent to $r$, $w \notin \mathcal L_r$ and $w \neq r$, we must have $\inn{m}{[w]} = 0$. Therefore this configuration is admissible, since $\mathcal L_r$ satisfies Case 2 for all $r$ in the interior of $E$. See Figure \ref{figure4}. Notice that in the pointed vertex Example \ref{pointed_vertex}, this configuration is replicated by every edge $E_k$ with $k \neq j$.  
\end{ex}
\begin{figure}[ht]
  \begin{minipage}[b]{0.45\linewidth}

    \centering
    \begin{tikzpicture}
        \draw (0,0)--(2,0) node[right] {$E$} ;
        \fill[black] (0,0) circle (1.5pt);
        \fill[black] (1,0) circle (1.5pt);
        \fill[black] (2,0) circle (1.5pt);
        \draw (0,0)--(1.5,1.5) ;
        \draw (2,0)--(3.5,1.5) node[left] {$F_1$};
        \draw (0,0)--(0,2) node[right] {$F_2$};
        \draw (2,0)--(2,2) ;
        \draw (0,0)--(1.5,-1.5) node[right] {$F_3$};
        \draw (2,0)--(3.5,-1.5) ;
    \end{tikzpicture}
    \end{minipage} \hfill
    \begin{minipage}[b]{0.45\linewidth}
    \begin{tikzpicture}
        \draw (-1.5,1)--(3.5,1) node[right] {$E$} ;
        \fill[black] (0,1) circle (1.5pt);
        \fill[black] (1,1)  circle (1.5pt);
        \fill[black] (2,1) circle (1.5pt);
        \fill[black] (0,2) circle (1.5pt);
        \fill[black] (0,3) circle (1.5pt);
        \fill[red] (1,2) node[below] {$p(F_j)$} circle (1.5pt);
        \fill[black] (2,2) circle (1.5pt);
        \fill[black] (1,3) circle (1.5pt);
        \fill[black] (3,1) circle (1.5pt);
        \fill[black] (-1,1) circle (1.5pt);
        \fill[black] (3,3) circle (1.5pt);
        \fill[black] (-1,3) circle (1.5pt);
        \fill[red] (3,2) circle (1.5pt);
        \fill[red] (-1,2) circle (1.5pt);
        \fill[black] (2,3)  circle (1.5pt);
        \draw node at (1,3.5) {$F_j$};
    \end{tikzpicture}
    \end{minipage}
    \caption{The empty edge: the chosen parity is drawn in red in any face.}
    \label{figure4}
\end{figure}
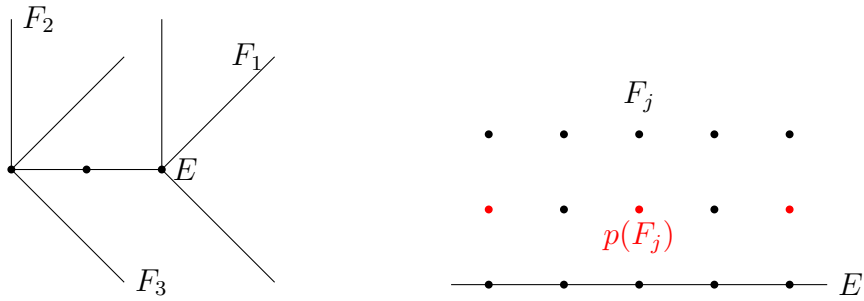
\begin{ex}[The non-admissible edge]
 Having chosen $p_{F_1}$ and $p_{F_2}$, not lying on $E$, one can choose $p_{F_3}$, not lying on $E$, so that $p_{F_1}+ p_{F_2}+ p_{F_3} = 0$. In particular, there does not exist $m$ such that $\inn{m}{p_{F_j}}= 1$ for all $j=1,2,3$. Hence this configuration is not admissible since $\mathcal L_r$ cannot satisfy Case 2. 
\end{ex}

\subsection{Global examples in dimension 3} \label{sec:globalex}
We describe a simple way to construct a divisor $L$ satisfying  \eqref{div_maximal}, which is independent of the chosen primitive triangulation $T$. The method works well on some examples of reflexive polytopes.

\begin{thm} \label{dim3_maximal} Let $\Delta \subset M_{\R}$ be a smooth reflexive polytope of dimension $4$ satisfying one of the following hypothesis
\begin{itemize}
    \item for every $2$-dimensional face $F$ of $\Delta$, the boundary $\partial F$ contains only three parities;
    \item every edge of $\Delta$ has even length.
\end{itemize}
Then, for every primitive central triangulation $T$ of $\Delta$, there exists a divisor $L \in H^1(X_{\trop}^\circ, \mathcal F^1 \otimes \FF_2)$ satisfying \eqref{div_maximal}.   
\end{thm}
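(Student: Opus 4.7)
The plan is to produce the divisor $L$ by choosing, for every $2$-dimensional face $F$ of $\Delta$, a parity $p_F\in[F]$, then letting $L$ consist of those lattice points of $\partial\Delta$ whose parity equals the corresponding $p_F$ on every face containing them, and finally verifying the local admissibility conditions of Section~\ref{sec:localconf} (Case~1 or Case~2) at every point $r$ of the $2$-skeleton of $\partial\Delta$.

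\emph{The case of hypothesis 2.} Since the $1$-skeleton of $\Delta$ is connected and every edge has even length, the endpoints of any edge share the same parity, and walking along the $1$-skeleton shows that all vertices of $\Delta$ have a common parity $p\in M\otimes\FF_2$. Setting $p_F=p$ for every $F$ defines a consistent $L$. The local configuration then reduces to the simple vertex of Example~\ref{simple_vertex} around each vertex of $\Delta$ (whose admissibility uses that $T$ is primitive, so that all lattice points adjacent to $r$ differ from $r$ in parity and $r$ is isolated in $L$), to the simple edge of Example~\ref{simple_edge} around each edge of $\Delta$, and to the fixed-parity situation of Section~\ref{interior_face} on face interiors; each is admissible.

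\emph{The case of hypothesis 1.} For each $2$-face $F$, let $p_F$ be the unique parity in $[F]$ not lying on $\partial F$. Since $p_F\notin[E]$ for every edge $E\subset\partial F$, the family $\{p_F\}$ is automatically compatible on shared edges, and $L$ is well-defined. Interiors of $2$-faces are handled by Section~\ref{interior_face}. Around each edge $E$ we obtain an empty-edge configuration, and its admissibility reduces to showing affine independence of $[r],p_{F_1},p_{F_2},p_{F_3}$ in $M\otimes\FF_2$. Writing these parities in the coordinates dual to the vertices $n_1,n_2,n_3$ of $E^\vee$ (which extend to a basis of $N$ by smoothness of $\Delta$), a direct calculation gives the coordinate values $(1,1,1),(0,1,1),(1,0,1),(1,1,0)$; the pairwise differences are the standard basis of $\FF_2^3$, so affine independence holds unconditionally and a valid $m\in N\otimes\FF_2$ exists.

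The main obstacle is admissibility at each vertex $r$ of $\Delta$ under hypothesis~1. In the smooth local chart at $r$, with primitive edge vectors $e_1,\dots,e_4$ and dual basis $n_1,\dots,n_4$, one computes $p_{F_{ij}}=[r]+[e_i]+[e_j]$, so neither $r$ nor any $v_i=r+e_i$ belongs to $L$; the set $\mathcal L_r$ consists precisely of those interior lattice points of surrounding $2$-faces, of parity $[r]+[e_i]+[e_j]$, that happen to be connected to $r$ by an edge of $T$, and no uniform $m\in N\otimes\FF_2$ satisfies the Case~2 relations for every admissible triangulation. I would resolve this by going beyond the sufficient conditions of Section~\ref{sec:localconf} and verifying the bilinear identity~\eqref{dmax_loc} directly, using the $\FF_2$-linear relations among the $D_v$ with $v\leftrightarrow r$ in the smooth toric chart at $r$ together with the $S_4$-symmetry of the family $p_{F_{ij}}$; alternatively, one may perform a local modification of $L$ near each vertex (adding or removing a controlled set of vertex-adjacent points) to convert the local configuration into a pointed vertex of Example~\ref{pointed_vertex}, whose admissibility is already established, while leaving the edge and face-interior cases untouched.
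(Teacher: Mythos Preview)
Your treatment of the even-edge hypothesis is correct and matches the paper's argument exactly.

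For the first hypothesis, however, there is a genuine gap. Your uniform choice $p_F=$ ``the unique parity in $[F]$ not on $\partial F$'' produces, at every vertex $r$ of $\Delta$, precisely the configuration that the paper singles out as the \emph{non-admissible vertex}: all six parities $p_{F_{ij}}=[r]+[e_i]+[e_j]$ avoid $[r]$ and all $[v_i]$, and since $[v_1],\dots,[v_4]$ form a basis of $M\otimes\FF_2$ there is no $m$ satisfying the Case~2 conditions. Your proposed fixes do not close this gap. Verifying \eqref{dmax_loc} directly would require controlling which interior face points of parity $p_{F_{ij}}$ are $T$-adjacent to $r$, and this depends on the triangulation; no uniform argument is given. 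A local modification of $L$ near each vertex necessarily alters $L$ on the incident faces and edges, so the claim that the edge and face-interior verifications are ``untouched'' is unsubstantiated.

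The paper resolves this by making a \emph{non-uniform} choice of parities. Fix one vertex $v_0$ of $\Delta$ and set $p=[v_0]$; on each $2$-face $F$ take $p_F=p$ if $p\in[F]$, and otherwise take $p_F$ to be the unique parity in $[F]$ not on $\partial F$. Then $v_0$ is a simple vertex, and an inductive walk along the $1$-skeleton (using the three-parity hypothesis on each $2$-face) shows that every vertex reached is either simple or pointed, and every edge is either simple or empty. Thus all local configurations are admissible, and the result holds for every central primitive triangulation. The key idea you are missing is this base-point construction, which breaks the $S_4$-symmetry at each vertex just enough to land in the pointed-vertex case rather than the non-admissible one.
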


\begin{proof} 
The easiest case is when $\Delta$ satisfies the second bullet. With this hypothesis, all vertices $v$ of $\Delta$ have the same parity, i.e. there exists $p \in M \otimes \FF_2$ such that $[v]= p$ for every vertex $v \in \Delta$. Let $L$ consist of all the points $r \in \Sk_2(\Delta) \cap M$ of parity $p$. We have $p \in [F]$ for all proper faces of $\Delta$, therefore, in the interior of every two dimensional face, $L$ is like in \S \ref{interior_face}. Every vertex is a simple vertex (Example \ref{simple_vertex}) and every edge is a simple edge (Example \ref{simple_edge}).

Assume now that $\Delta$ satisfies the first bullet. Choose a vertex $v_0 \in \Delta$ and let $p=[v_0]$. Let $L$ be the divisor such that, for every $2$-dimensional face $F$, $L \cap F$ consists of points of parity $p$ if $p \in [F]$, otherwise it consists of points of the unique parity in $[F]$ which is not contained in $\partial F$. Then, in the interior of $F$, $L$ is as in Example \ref{interior_face}. Notice that $v_0$ is a simple vertex. 

Suppose now that $v \in \Delta$ is a simple vertex with respect to $L$. Then $[v] = p$ and every edge containing $v$ is a simple edge. We have that any other vertex $v'$ of $\Delta$ which is connected to $v$ by an edge $E$, is either a simple or a pointed vertex. Indeed, if $E$ has even length, then also $[v']= p$, and $v'$ is a simple vertex by definition. If $E$ has odd length, then $[v'] \neq p$. Let $v''$ be the integral point in the interior of $E$ which is closest to $v'$, then $[v''] = p$. Let $F$ be a two dimensional face containing $v'$. If $F$ contains $E$, then by definition $L \cap F$ consists of points of parity $p$. If $F$ does not contain $E$, then $p \notin [F]$, therefore $L \cap F$ consists of points whose parity is the unique one in $[F]$ which is not on $\partial F$. Hence $v'$ must be pointed vertex. 

Suppose now that $v$ is a pointed vertex and denote by $E_s$ the unique simple edge containing $v$. Let us prove that any other vertex $v'$ of $\Delta$ which is connected to $v$ by an edge $E$, is either a simple or a pointed vertex. Assume first that $E=E_s$. If $E$ has odd length, $[v'] = p$ and hence $v'$ is a simple vertex. If $E$ has even length, as we argued in the previous case, $v'$ must be a pointed vertex. Now assume $E$ is one of the empty edges. Let $F_s$ be the two dimensional face which contains $E$ and $E_s$. Obviously $v'$ is a vertex of $F_s$. Let $E'_s$ be the other edge of $F_s$ which contains $v'$ and let $v''$ be the integral point in the interior of $E'_s$ which is closest to $v'$. By the property that $\partial F_s$ contains only three parities, we must have that $[v''] = p$, since none of the two parities in $E$ can be equal to $p$. Now let $F$ be a face containing $E$, but not $E_s$. Notice that since $v$ is a pointed vertex, $L \cap F$ is the set of points in $F$ whose parity is the unique one not contained in $\partial F$. Let $E'$ be the other edge of $F$ which contains $v'$. Then $L \cap E' = \emptyset$. Let $F'$ be a two dimensional face containing $v'$ but not $E$. If it contains $E'_s$, then $L \cap F'$ is the set points of parity $p$. If $F'$ does not contain $E'_s$, then $p \notin [F']$ and $L \cap F'$ is set of points whose parity is the unique one not in $\partial F'$. This shows that $v'$ is a pointed vertex. 

Given any vertex $v$ of $\Delta$, we can connect it to $v_0$ by a path of edges, and therefore $v$ is either simple or pointed. Moreover, also every edge is either simple or empty. This concludes the proof of the theorem. 
\end{proof}

\begin{cor}
    Under the same assumptions as in Theorem~\ref{dim3_maximal}, for any primitive triangulation \(T\) of \(\Delta\) there exists a divisor \(D\) such that the real hypersurface \(\mathbb{R}X_D\) is connected and
    \[
        b_1(\mathbb{R}X_D) = h^{1,2}(X)+h^{1,1}(X)-1.
    \]
\end{cor}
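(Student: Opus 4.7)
The plan is to bolt the corollary directly onto the three preceding pillars: Theorem~\ref{dim3_maximal}, the surjectivity Theorem~\ref{thm:surj}, and Corollary~\ref{cor:firstBetti2}. First, I would apply Theorem~\ref{dim3_maximal} under the stated hypotheses to obtain a class $L \in H^1(X_{\trop}^\circ, \mathcal{F}^1 \otimes \FF_2)$ satisfying $\beta^2 + \beta \cup L = 0$ for every $\beta$, and then note that $L$ is non-zero: the construction in the proof of that theorem builds $L$ as the set of lattice points of a chosen parity $p$, which includes at least the vertex $v_0$ in the first case (where $p = [v_0]$) and every vertex of $\Delta$ in the second case (where all vertices share the common parity $p$ since every edge has even length).

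Second, since $\Delta$ is smooth reflexive and $n = 3 \geq 3$, Theorem~\ref{thm:surj} provides the surjectivity of the restriction map $H^1(\T\Sigma_T, \mathcal{F}^1 \otimes \FF_2) \to H^1(X_{\trop}^\circ, \mathcal{F}^1 \otimes \FF_2)$. I would lift $L$ to a class $D \in H^1(\T\Sigma_T, \mathcal{F}^1 \otimes \FF_2)$ with $D\vert_{X_{\trop}^\circ} = L \neq 0$. This $D$ satisfies the condition~\eqref{d1_zero} of Corollary~\ref{cor:firstBetti} by construction, so it is a legitimate input for the Betti-number machinery of Corollary~\ref{cor:firstBetti2}.

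Finally, invoking the second bullet of Corollary~\ref{cor:firstBetti2} (applicable because $n=3$ and $\Delta$ is non-singular) yields that $\R X_D$ is connected and $b_1(\R X_D) = h^{n-1,1}(X) + h^{1,1}(X) - 1$. The stated identity then follows from the Hodge symmetry $h^{2,1}(X) = h^{1,2}(X)$ on the Calabi-Yau threefold $X$. No substantive obstacle is expected at this stage: the genuine work is concentrated in Theorem~\ref{dim3_maximal}, and the rest of the argument is a bookkeeping exercise of chaining previously established results. The only minor checkpoint is the non-vanishing of $L$, which is read off the explicit construction, ruling out the degenerate scenario $D\vert_{X_{\trop}^\circ}=0$ of the first bullet of Corollary~\ref{cor:firstBetti2}.
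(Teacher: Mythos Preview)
Your approach matches the paper's implicit argument (the paper gives no explicit proof): feed the divisor $L$ from Theorem~\ref{dim3_maximal} into Corollary~\ref{cor:firstBetti2}. One simplification: invoking Theorem~\ref{thm:surj} to lift $L$ is unnecessary, since the $L$ built in the proof of Theorem~\ref{dim3_maximal} is already an explicit sum of toric divisors $D_v$ over lattice points $v\in\Sk_2(\partial\Delta)$, and hence already lives in $H^1(\T\Sigma_T,\mathcal F^1\otimes\FF_2)$ by construction; you may set $D=L$ directly.

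One point to tighten: your justification that $L|_{X_{\trop}^\circ}\neq 0$ (because the formal sum contains the vertex $v_0$, or all vertices of $\Delta$) only shows that the divisor is non-empty as a formal sum, not that its \emph{cohomology class} in $H^1(X_{\trop}^\circ,\mathcal F^1\otimes\FF_2)$ is non-zero; linear-equivalence relations could in principle annihilate it. The paper does not address this point either, so your argument is no less complete than what the paper supplies, but strictly speaking one would need to verify that no $m\in N\otimes\FF_2$ realizes $L$ as a principal divisor modulo $2$ (equivalently, that the characteristic function of the chosen parity $p$ on the lattice points of $\Sk_2(\partial\Delta)$ is not of the form $v\mapsto\langle m,v\rangle$).
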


\begin{ex} Any smooth reflexive $4$ dimensional polytope whose two dimensional faces are simplices, satisfies the second bullet. 
For instance let $P$ be a standard (unimodular) $4$ dimensional simplex in $\R^4$. Then $\Delta=5P$ is the reflexive polytope which corresponds to quintics in $\PP^4$. Any central triangulation of $\Delta$ admits a patchworking real quintic $\R X$ with $b_1(\R X) = h^{1,1}(X) + h^{1,2}(X)-1 = 101$. A similar construction, on a topological model of the quintic, was given by the first author in the context of antisymplectic involutions preserving Lagrangian fibrations, see Section 8.2 of \cite{Matessi2024}. The main difference here is that via patchworking we find actual quintics, instead of just a topological model. Moreover, Theorem \ref{dim3_maximal} proves existence for any central triangulation instead of a specific one.
\end{ex}

\begin{ex} Consider the cube $\square=[-1,1]^4$. This is the polytope for a Calabi-Yau hypersurface $X$ of degree $(2,2,2,2)$ in $(\C P^1)^4$. All of its edges have length $2$, therefore for any central triangulation of $\square$ we can find a patchworking $\R X$ with first Betti number $b_1(\R X) = h^{1,1}(X) + h^{1,2}(X)-1$.  
\end{ex} 

\begin{ex} Of course there are a lot of examples where Theorem \ref{dim3_maximal} does not apply. A simple example is the following. Let $P$ be a standard $2$-dimensional simplex and let $\Delta = (3P) \times (3P)$. It corresponds to hypersurfaces in $\C P^2 \times \C P^2$ of degree $(3,3)$. Any square two dimensional face $F$ of $\Delta$ is such that $\partial F$ contains $4$-parities, therefore Theorem \ref{dim3_maximal} does not apply. It is unlikely that a divisor satisfying \eqref{div_maximal} exists for all triangulations. For the moment we have been unable to find a triangulation admitting such a divisor. 
\end{ex}

\subsection{Higher dimensions} The problem of maximizing the first Betti number in higher dimensions is harder. Indeed in higher dimensions there is an interesting obstruction to finding a divisor satisfying \eqref{div_maximal}. Suppose such a divisor exists. Given $\alpha, \beta \in H^1(X_{\trop}^\circ, \mathcal F^1)$ we have
\[
\begin{split}
    \beta^2 &= L \cup \beta \\
    \alpha^2 & = L \cup \alpha.
\end{split}
\]
If we multiply the first equation by $\alpha$, the second by $\beta$ and then subtract the two equations, we get that
\[ \alpha^2 \cup \beta = \alpha \cup \beta^2 \quad \text{for all} \ \ \ \alpha, \beta \in H^1(X_{\trop}^\circ, \mathcal F^1).\]
It can be shown that in dimension $n=3$, this relation is indeed satisfied on a mirror $X_{\trop}^\circ$ associated to any central triangulation. On the contrary, in higher dimensions, not all mirrors $X_{\trop}^\circ$, i.e. not all triangulations, satisfy this property. More details on this obstruction and on the higher-dimensional problem will be given in our forthcoming paper   
\cite{MR26}.

\bibliographystyle{plain}
\bibliography{biblio}
\end{document}